\newtheorem{thm}{Theorem}[section]
\theoremstyle{definition}
\theoremstyle{remark}  
\def\beq{\begin{eqnarray}}  
\def\eeq{\end{eqnarray}}  
\def\bsp{\begin{split}}  
\def\esp{\end{split}}
\def\d{\mathrm{d}}  
\def\span{\mathrm{span}}
\def \g{\mathfrak g}
\newcommand{\mf}[1]{{\mathfrak #1}}
\def\R{\mathbb{R}}
\def\C{\mathbb{C}}
\def\N{\mathcal{N}}
\def\Ric{\mathrm{Ric}}
\def\doplus{~\dot\oplus~}
\newcommand{\dotbigoplus}[2]{\overset{#2}{\underset{#1}{\dot\bigoplus}} ~}
\begin{document}   
   
\title{\Large\textbf{Left-invariant Pseudo-Riemannian metrics on Lie groups: \\The null cone II }}  
\author{{\large{Sigbj\o rn Hervik$^\clubsuit$}    }
 \vspace{0.3cm} \\
 $^\clubsuit$ Department of Mathematics and Physics, 
 University of Stavanger, \\  N-4036 Stavanger, Norway\\
{ \texttt{sigbjorn.hervik@uis.no,} }}
\date{\today}
\maketitle
\pagestyle{fancy}
\fancyhead{} 
\fancyhead[EC]{Sigbjørn Hervik}
\fancyhead[EL,OR]{\thepage} \fancyhead[OC]{Left-invariant Pseudo-Riemannian metrics on Lie groups: The null cone II}
\fancyfoot{} 

\begin{abstract}
We continue to study left-invariant pseudo-Riemannian metrics on Lie groups being in the null cone of the $O(p,q)$-action  using the moving bracket approach. In particular, the Lie algebra being in the null cone implies that the pseudo-Riemannian metric have all vanishing scalar curvature invariants (VSI). We consider \emph{all} Lie algebras of dimension $\leq 6$ and we find that all solvable Lie algebras, and non-trivially Levi-decomposable Lie algebras, of dimension $\leq 6$ are in the null cone, \emph{except} the 3-dimensional solvable  Lie algebra $\mf{s}_{3,3}$. For  $\mf{g}$  semi-simple, we also give a construction where $\mf{g}\oplus\R^m$ is in the null cone and give examples of such spaces for \emph{all}  the real simple Lie algebras $\mf{g}$. For example, for the exceptional split groups this construction places the split $\mf{e}_6\oplus\R^6$,  split $\mf{e}_7\oplus\R^7$ and  split $\mf{e}_8\oplus\R^8$ in the null cone of the $O(42,42)$, $O(70.70)$ and $O(128,128)$ action, respectively, and hence, their corresponding left-invariant pseudo-Riemannian metrics are VSI. 
\end{abstract} 
\section{Introduction}
In this paper we continue the work started in \cite{nullcone}  and investigate left-invariant pseudo-Riemannian metrics in the null cone and, in particular, we consider \emph{all} Lie algebras of dimension $\leq 6$.  We also do a systematic study of Lie algebras being direct sums of any semi-simple Lie algebra and $\R^m$. 

Previously, the study of left-invariant (pseudo-)Riemannian metrics have been focussed on Einstein metrics, or Ricci nil- or solsoliton metrics \cite{Wolter,Wolter2,Heber,L1,L2,L3,Sig,L4,Lauret06,Lauret09,Lauret10,LW11,L5,CF,solwaves,CR19a,CR19b,CR21}. 
The motivation for studying Lie algebras in the null-cone  comes from general relativity, pseudo-Riemannian geometry and invariant theory. All of these metrics in the null cone have the property that \emph{all polynomial scalar curvature invariants vanish}, they are so-called VSI spaces \cite{VSI,Higher,CFHP,Hervik12}.  This implies that if we compute any polynomial scalar curvature invariant it will be zero. The simplest scalars of this kind are the Ricci scalar, $R$, and the Kretchmann scalar being quadratic in the Riemann tensor. Polynomial curvature invariants can also be constructed from the covariant derivatives of the Riemann tensor: $\nabla{\rm Riem}$, $\nabla^2{\rm Riem}$, etc. Any scalar invariant constructed from these curvature tensors will vanish for the VSI-spaces, and in particular, the metrics considered here. The VSI spaces have shown to be important in alternative theories of gravity, and in higher gravity theories in particular \cite{HorowitzSteif,CGHP}. Requiring the Lie algebra being in the null cone also implies that the scalar invariants of the Killing operator will vanish implying the Killing operator is a nilpotent operator \cite{nullcone}. 

Mathematically speaking, the question at hand is to investigate which Lie algebras are in the null cone of the action of som $O(p,q)$ action. Some results were established in \cite{nullcone}: 
\begin{enumerate}
    \item If the Lie algebra $\mf{g}$ is semi-simple, then it is not in the null cone of any $O(p,q)$-action. 
    \item If the Lie algebra $\mf{g}$ is nilpotent, then it is in the null cone of some $O(p,q)$-action.
    \item If the Lie algebra $\mf{g}$ is completely solvable, then it is in the null cone of the split $O(p,q)$-action (i.e., either $O(p,p)$ or $O(p,p+1)$).
    \item If the Lie algebra $\mf{g}$ is in the null cone of the $O(p,p+k)$ action, then $\mf{g}$ has a nilpotent subalgebra of dimension $p+k$. 
\end{enumerate}
The aim of this paper is to systematically consider classes of Lie algebras and investigate whether they are in the null cone or not. 
We will  consider \emph{all} Lie algebras of dimension $\leq 6$. Of these, the solvable Lie are the most numerous, and we find they all are in the null cone, except for the 3-dimensional Lie algebra $\mf{s}_{3,3}$ \cite{SW}. This Lie algebra was pointed out in \cite{nullcone} not being in the null cone, and is not a completely solvable Lie algebra. Amazingly, this turns out to be the only solvable exception among solvable Lie algebras of dimensions $\leq 6$ for being in the null cone for some $O(p,q)$-action. 

We also find other examples of metrics not in the null cone, and all of these contain a semi-simple subalgebra. For example, in dimension 5, the only Lie algebras in the null cone having a simple subalgebra are the decomposable Lie algebra $\mf{s}\mf{l}(2,\R)\oplus\R^2$ and the nontrival Levi decomposable $\mf{s}\mf{l}(2,\R)\oplusrhrim\R^2$. 

 After this systematic treatment considering dimension by dimension, we follow a constructive approach to  studying left-invariant metrics on semi-simple Lie groups $G$, adding abelian dimensions: $G\times \mathbb{R}^m$, aiming to find the smallest number $m$ possible so that  $G\times \mathbb{R}^m$ is in the null cone. This is highly successful and we give a recipe for constructing null cone Lie algebras   for any real semi-simple group $G$. For example,  for all the split simple groups we show, for example, that the 256-dimensional Lie algebra $\mf{e}_8\oplus\R^8$ (direct sum) is in the null cone of the $O(128,128)$-action. In general, for the Split $G\times \mathbb{R}^m$, where $m$ equals the real rank of the simple group $G$, is in the null cone of the $O(p,p)$-action, where $p=\dim(G\times \R^m)/2$. 
 
 We also extend this construction to all real semi-simple groups $G$. For these the number $m$ are larger than the split case since the dimension of the largest compact subgroup increases and the nilpotent subgroup decreases. For the compact $G$  case, Markestad \cite{Markestad} showed that $m=\dim(G)$ would give a Lie algebra $\mf{g}\oplus\R^m$ in the null cone. 
 
 Our main results are: 
 \begin{thm}
The following classes of Lie algebras are in the null cone of some $O(p,q)$-action:
 \begin{itemize}
 \item{} All  solvable Lie algebras of dimension $\leq 6$, \emph{except} the 3-dimensional Lie algebra ${\mf s}_{3,3}$ given in \cite{SW}.
 \item{} All non-trivial Levi-decomposable Lie algebras of dimension $\leq 6$. 
 \item{} Lie algebras being a direct sum $\mf{g}\oplus\R^m$, where $\mf{g}$ is semi-simple, and $m\geq \dim(\mf{g}_0)$ where ${\mf g}=\dot\bigoplus_{\lambda=-\Delta}^{\Delta}{\mf g}_\lambda$ is a $Z$-grading of ${\mf g}$. 
 \end{itemize}  
 \end{thm}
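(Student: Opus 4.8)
The plan is to reduce every case to a single sufficient criterion, implicit in \cite{nullcone}: a bracket $\mu$ lies in the null cone as soon as one can choose a pseudo-Euclidean inner product on the underlying space and a boost generator $A\in\mf{so}(p,q)$, diagonalisable over $\R$, for which every nonzero structure constant has strictly positive boost weight. Concretely, if $A$ has real eigenvalues $a_x$ on a metric-adapted basis $\{x\}$ (so the eigenvalues occur in $\pm$ pairs) and $\mu(x,y)=\sum_w\mu^w_{xy}\,w$, then $(\exp(tA)\cdot\mu)(x,y)=\sum_w\mu^w_{xy}\,e^{t(a_w-a_x-a_y)}w$, which tends to $0$ as $t\to\infty$ provided $a_x+a_y>a_w$ whenever $\mu^w_{xy}\neq0$. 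The whole problem then becomes the combinatorial task of producing such weights together with a compatible ($\pm$-paired) metric.

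For the third bullet the grading does the work. Writing $\mf{g}=\dot\bigoplus_{\lambda=-\Delta}^{\Delta}\mf{g}_\lambda$, I would assign the affine boost weight $a(\mf{g}_\lambda)=\beta-\gamma\lambda$ with $\beta,\gamma>0$ and let the weights on the adjoined $\R^m$ be whatever the metric forces. Because $[\mf{g}_\lambda,\mf{g}_\mu]\subseteq\mf{g}_{\lambda+\mu}$, the boost weight of each structure constant equals $a(\mf{g}_\lambda)+a(\mf{g}_\mu)-a(\mf{g}_{\lambda+\mu})=\beta>0$ independently of $\lambda,\mu$; this uniform cancellation is the heart of the construction and gives positive boost weight on all of $\mf{g}$ at once (the abelian $\R^m$ contributes no structure constants). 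The metric must pair weight $w$ with $-w$; taking $\gamma=2\beta$ pairs $\mf{g}_\lambda$ with $\mf{g}_{1-\lambda}$, the internal pairings absorbing $\min(\dim\mf{g}_\lambda,\dim\mf{g}_{1-\lambda})$ directions in each such pair and the surplus being paired against $\R^m$. A telescoping count of the surplus, using $\dim\mf{g}_\lambda=\dim\mf{g}_{-\lambda}$ and the non-increasing behaviour of $\dim\mf{g}_\lambda$ in $|\lambda|$, then yields exactly $\dim\mf{g}_0$ external directions, which is the asserted bound $m\geq\dim\mf{g}_0$ (surplus $\R$ factors beyond this are paired harmlessly among themselves, adjusting parity to land in $O(p,p)$ or $O(p,p+1)$).

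For the first two bullets I would run the classification of \cite{SW}. Every completely solvable algebra of dimension $\leq6$ is already in the null cone by item (3) of the quoted results, so only the finitely many solvable algebras whose $\mathrm{ad}$-operators have non-real eigenvalues, together with the finitely many non-trivial Levi-decomposable algebras $\mf{s}\ltimes\mf{r}$ with $\mf{s}\in\{\mf{sl}(2,\R),\mf{su}(2)\}$, remain. For each I would exhibit the metric and boost directly: in the Levi cases one extends the grading of the $\mf{sl}(2,\R)$ factor over the radical through its $\mf{s}$-module weights (and in the compact case exploits the module structure of $\mf{r}$ to supply the degeneration), reusing the affine-weight recipe; in the complex-eigenvalue solvable cases one pairs the invariant real $2$-planes and assigns boost weights to them as blocks. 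The single genuine exception, $\mf{s}_{3,3}$, is excluded because its rotational $\mathrm{ad}$-action carries a compact $SO(2)$ preserving a nonzero definite quantity, so no boost can drive its bracket to zero — exactly the non-membership already recorded in \cite{nullcone}.

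The main obstacle is the third bullet, and within it two points. First, the obvious metric pairing $\mf{g}_\lambda$ with $\mf{g}_{-\lambda}$ via the Killing form forces an \emph{odd} weight function and fails on the balanced brackets $[\mf{g}_\lambda,\mf{g}_{-\lambda}]\subseteq\mf{g}_0$ (precisely why semi-simple $\mf{g}$ alone is not in the null cone); the resolution, pairing $\mf{g}_\lambda$ with $\mf{g}_{1-\lambda}$ at the cost of the external $\R^m$, is the one genuinely new idea and must be checked to respect $\dim\mf{g}_\lambda=\dim\mf{g}_{1-\lambda}$ on the internally paired part. Second, pinning the external count to exactly $\dim\mf{g}_0$ relies on the unimodality of the grade dimensions, which I would justify for the gradings used by passing to an even grading attached to an $\mf{sl}(2)$-triple, where $\mf{sl}(2)$-representation theory supplies the monotonicity. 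The low-dimensional bullets, once this criterion is in hand, are conceptually routine; their only difficulty is the bookkeeping of a long but finite list.
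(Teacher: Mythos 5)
Your proposal is correct and is essentially the paper's own argument: for the third bullet your affine weights $a(\mf{g}_\lambda)=\beta(1-2\lambda)$, the shifted pairing of $\mf{g}_\lambda$ with $\mf{g}_{1-\lambda}$, the padding $m_\lambda=\dim\mf{g}_{\lambda-1}-\dim\mf{g}_\lambda$, and the telescoping count giving $m=\dim\mf{g}_0$ reproduce exactly the paper's class $[2\Delta+1,2\Delta-1,\ldots,3,1]$ construction with its uniform cancellation $(2\lambda-1)+(2\sigma-1)-(2(\lambda+\sigma)-1)=-1$, and your unimodality remark even makes explicit a monotonicity assumption the paper leaves implicit. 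For the first two bullets the paper proceeds just as you outline --- complete solvability handles the bulk, while the finitely many rotational cases (e.g.\ $\mf{s}_{4,12}$, $\mf{s}_{5,45}$, where the invariant $2$-planes are indeed placed as blocks compatible with the boost) and the Levi-decomposable cases (e.g.\ $\mf{su}(2)\ltimes\R^3$, settled by pairing the semisimple factor against its module, following Markestad) are verified by explicit frames and classes in the tables, which is precisely the finite bookkeeping your sketch defers.
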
 
 As for notation: $\oplus$ is direct sum both as a vector space and as a Lie algebra; $\doplus$ is direct sum as a vector space; and $\oplusrhrim$ is semi-direct sum in terms of a non-trivial Levi decomposable Lie algebra. 
 
 The only 3-dimensional solvable Lie algebras \emph{not} being in the null cone is the following: 
 \beq
{\mathfrak s}_{3,3}:\quad [e_2,e_1]=\alpha e_2-e_3, \qquad  [e_3,e_1]=e_2+\alpha e_3, \qquad \alpha\geq 0. 
\eeq
and is not a completely solvable Lie algebra. There are other not-completely solvable Lie algebra in dimensions $\leq 6$, but these are actually in the null cone.  
 
 \subsection{The moving bracket approach}

Let us briefly recall the moving bracket approach. Let $V$ be a real vector space. Then a Lie algebra $\g\cong V$ (as a vector space) can be defined specifying a map $\mu: V\times V\rightarrow V$ using the skew-symmetric bracket $\mu$: $[X,Y]=\mu(X,Y)$, for all $X,Y\in V$ fulfilling the Jacobi identity: 
\beq
\mu(\mu(X,Y),Z)+\mu(\mu(Z,X),Y)+\mu(\mu(Y,Z),X)=0.
\eeq
As a tensor,  $\mu\in \mathcal{V}:=V\otimes\wedge^2V^*$ obeys the Jacobi identity. We equip $V$ with a (fixed)  pseudo-Riemannian metric $g$ of signature $(p,q)$ where $\dim(V)=p+q$ in the following way: Assume $p\leq q$ so that $q-p=k$,  we will choose a null basis $\{e_1,...,e_{2p+k}\}$ with a corresponding co-basis $\{\theta^1,...,\theta^{2p+k}\}$: 
\beq \label{metric}
g=2\theta^1\theta^2+...+2\theta^{2p-1}\theta^{2p}+\sum_{i=2p+1}^{2p+k}\left(\theta^i\right)^2.
\eeq
This choice gives a split of the tangent space into $T_pM=N_-\doplus H\doplus N_+$, where $N_-=\span\{e_1, e_3, ..., e_{2p-1}\}$ and  $N_+=\span\{e_2, e_4, ..., e_{2p}\}$ are null, and $H=\span\{e_{2p+1}, e_{2p+2},...,e_{2p+k}\}$ is the transverse space. 
 
The structure group of the frame bundle for the pseudo-Riemannian metric is the group $G=O(p,q)$.  The structure group acts naturally on the bracket: Given $A\in O(p,q)$, then
\[ (A\cdot\mu)(X,Y)=A^{-1}(\mu(AX,AY)), \]
Hence, for a given $\mu$ we can consider the orbit $G\mu\subset \mathcal{V}$. Note that there are two groups involved here: First, is the structure group $O(p,q)$ of the pseudo-Riemannian space; second is the Lie group, $G$, which we assume is the manifold itself. The latter has a Lie algebra represented by the bracket $\mu$. Here, we are interested in the action of the first group on the Lie algebra of the latter. 

Assuming a left-invariant metric the bracket $\mu$, we can be used to define a left-invariant torsion-free connection, $\nabla$, by requiring
\beq\label{torsionfree}
\nabla_{X}Y-\nabla_YX=\mu(X,Y), \quad \forall X,Y\in V.  
\eeq
The curvature tensors can from this be computed, as well as their scalar polynomial invariants. Polynomial curvature invariants are $O(p,q)-$invariant polynomials of the components of the curvature tensors, and are thus constants along orbits. They are polynomial in the curvature tensors and also polynomial in the structure constants. 

The left-invariant null-basis $\{e_a\}$ and its corresponding left-invariant co-basis $\{\theta^a\}$, can be used to describe the structure coefficients: $[e_a,e_b]=C^c_{~ab}e_c$. These represent the components of $\mu$ with respect to the null-frame.

We will here consider various Lie algebras and useful for this is the book \cite{SW} where all Lie algebras  of dimension $\leq 6$ are listed. Whenever we refer to a specific Lie algebra we will use the notation in \cite{SW} as we will here consider \emph{all} Lie algebras of dimension $\leq 6$.

\section{The null cone}
Under the orbit of $G=O(p,q)$ acting on $\mathcal{V}$ the null cone, $\N\subset\mathcal{V}$ is defined as \cite{KW}: 
\[ \N:=\left\{\text{Lie algebra }\mu\in \mathcal{V}: 0\in\overline{G\mu}\right\}; \]
i.e., the null cone consists of all orbits $G\mu$ having $0$ in their closure, $\overline{G\mu}$. 

A few results are worth repeating \cite{RS}: 
\begin{thm} \label{thmRS}
Given the null cone as defined above. Then: 
\begin{enumerate}
\item{} The orbit $\{0\}$ is the unique closed orbit in $\N$. 
\item{} Consider all polynomial invariants constructed from the bracket under the action of $O(p,q)$. Then given a Lie algebra $\mu\in\mathcal{V}$, these are all zero if and only if $\mu\in\N$. 
\item{} Given a $\mu\neq 0$, and a Cartan decomposition $\g ={\mathfrak k}\doplus{\mathfrak p}$ of $\mathfrak{o}(p,q)$, then there exists an $X\in \mathfrak{p}$ so that $\lim_{t\rightarrow \infty} e^{tX}\cdot\mu=0.$
\end{enumerate}
\end{thm}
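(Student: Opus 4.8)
These three statements are the basic structure theorems of real Geometric Invariant Theory specialised to the reductive group $G=O(p,q)$ acting linearly on $\mathcal V=V\otimes\wedge^2V^*$, and the plan is to assemble them from the Kempf--Ness norm-minimisation picture together with the real Hilbert--Mumford theorem (Birkes, Richardson--Slodowy). I fix a $K$-invariant positive-definite inner product on $\mathcal V$ with associated norm $\|\cdot\|$, where $K=O(p)\times O(q)$ is the maximal compact subgroup whose Lie algebra is the $\mathfrak k$-factor of the given Cartan decomposition $\g=\mathfrak k\doplus\mathfrak p$, and I write $m(\mu)=\inf_{g\in G}\|g\cdot\mu\|^2$ for the infimum of the squared norm along the orbit. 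Statement (1) is then immediate: since $(A\cdot\mu)(X,Y)=A^{-1}(\mu(AX,AY))$ is linear in $\mu$, the point $0$ is fixed and $G\cdot 0=\{0\}$ is closed; conversely any closed orbit $G\mu\subseteq\N$ satisfies $0\in\overline{G\mu}=G\mu$, forcing $\mu\in G\cdot 0=\{0\}$, so $\{0\}$ is the unique closed orbit in the null cone.

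For statement (2) the forward implication is a continuity argument: every polynomial invariant $P$ is constant on orbits, hence on orbit closures, so $\mu\in\N$ forces $P(\mu)=P(0)$, which vanishes for every homogeneous invariant of positive degree (and hence for every invariant with no constant term). For the converse I would argue by norm minimisation. If $\mu\notin\N$ then $m(\mu)>0$, and Kempf--Ness theory guarantees that $\overline{G\mu}$ contains a \emph{minimal vector} $\nu$ realising $m(\mu)$, whose orbit $G\nu$ is closed and nonzero. The real GIT separation result then produces a homogeneous invariant $P$ with $P(\nu)\neq 0$; since $P$ is constant on $\overline{G\mu}\supseteq G\nu\cup G\mu$ this gives $P(\mu)=P(\nu)\neq 0$. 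Contrapositively, vanishing of all invariants forces $\mu\in\N$.

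For statement (3) I would use the Kempf--Ness function $f_\mu(g)=\log\|g\cdot\mu\|^2$. The key analytic input is that along any geodesic $t\mapsto e^{tX}K$ of the symmetric space $G/K$ with $X\in\mathfrak p$ the function $t\mapsto f_\mu(e^{tX})$ is convex, and $G/K$ is the natural domain since $f_\mu$ is right $K$-invariant. Assuming $\mu\in\N\setminus\{0\}$ we have $\inf_{g\in G}\|g\cdot\mu\|=0$, so $f_\mu$ is unbounded below; the real Hilbert--Mumford theorem then asserts that this infimum is already attained in the limit along a single geodesic ray, i.e.\ there is $X\in\mathfrak p$ with $f_\mu(e^{tX})\to-\infty$, equivalently $e^{tX}\cdot\mu\to 0$ as $t\to\infty$. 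The refinement that $X$ may be taken in $\mathfrak p$, rather than the generator of an arbitrary one-parameter subgroup, is exactly what the Cartan decomposition buys us: every destabilising one-parameter subgroup is $G$-conjugate, and after adjusting by $K$ it is $K$-conjugate, into $\exp(\mathfrak a)$ for a maximal abelian $\mathfrak a\subseteq\mathfrak p$, so its generator can be taken in $\mathfrak p$.

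The main obstacle is the real GIT content underlying the converse of (2) and the existence assertion in (3). Over $\C$ these follow from the classical Hilbert--Mumford criterion and the fact that the categorical quotient separates closed orbits, but over $\R$ orbit closures and the separation of closed orbits are genuinely more delicate, and one must invoke the Kempf--Ness/Richardson--Slodowy machinery (existence of minimal vectors in closures, closedness of orbits of minimal vectors, and attainment of $m(\mu)$ along a geodesic ray in $\exp\mathfrak p$) rather than a purely algebraic argument. Everything else --- statement (1), the forward direction of (2), and the convexity used in (3) --- is elementary once this framework is in place.
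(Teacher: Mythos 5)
Your proposal is correct and takes essentially the same route as the paper: the paper offers no proof of its own, stating these three facts verbatim as imported results of Richardson--Slodowy \cite{RS}, and your assembly via Kempf--Ness norm minimisation and the real Hilbert--Mumford theorem (minimal vectors in orbit closures, closedness of orbits of minimal vectors, destabilisation along a geodesic ray $e^{tX}$ with $X\in\mathfrak{p}$) is precisely the content of that reference. The only wobble is your closing remark on (3) --- conjugating a destabilising one-parameter subgroup into $\exp(\mathfrak{a})$ changes the point being destabilised unless the conjugating element lies in $K$, so it is the convexity/geodesic-ray argument you invoke first, not the conjugacy refinement, that actually produces $X\in\mathfrak{p}$ for the given $\mu$.
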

 These results imply that 
{\it all $O(p,q)$-invariant polynomials of $\mu$ must vanish on $\N$ }, including curvature invariants which are necessarily all  zero as well, i.e., they are VSI spaces \cite{Hervik12}. 

Let us also mention the Killing form, which is a homogeneous polynomial of order 2 in $\mu$. The Killing operator ${K}\in{\rm End}(V)$ is defined by:
\[ B(v,w)=g({K}(v),w), \quad \forall v,w\in V, \]
where $B$ is the Killing form. The polynomial invariants of the Killing form, as well as the eigenvalues, need to be zero if $\mu$ is in the null cone. 

Note that the orbits
\[ GL(n,\R)\cdot\mu=\{A\cdot\mu \in\mathcal{V}~:~ A\in GL(n,\R)\} \] 
defines isomorphic Lie algebras over $\R$, and 
\[ O(p,q)\cdot\mu=\{A\cdot\mu  \in\mathcal{V}~:~ A\in O(p,q)\} \] 
defines isometric pseudo-Riemannian structures. Here, we will be interested in for which Lie algebras $\mu$ 
\[ (GL(n,\R)\cdot\mu)\cap\mathcal{N}\neq \emptyset. \] 
Note that $(GL(n,\R)\cdot\mu)\cap\mathcal{N}$ may have several disjoint components  representing non-isometric pseudo-Riemannian metrics on the Lie algebra $\mu$. 

\subsection{The boost weight decomposition and the null cone} 
Given a Lie algebra in the null cone, then there exists an $X\in \mf{p}$ so that $\lim_{t\rightarrow\infty}e^{tX}\cdot\mu = 0$. Since $X\in\mf{p}$ we can by an inner automorphism choose a  frame so that $X$ is in the Cartan algebra of $\mf{o}(p,q)$, i.e., $X\in\mf{a}$. Using the set of positive weights of $\mf{a}$, $\sf{b}_i$, $i=1, ... ,p$, assuming $p\leq q$,  we can decompose $\mu$ into simultaneous eigenvectors of the weights ${\sf b}_i$: 
\[ \mu =\sum_{{\sf b}\in \Delta} \mu_{\sf b}, \qquad {\sf b}=(b_1,b_2,..., b_p).\]
We will refer to the tuple $ {\sf b}=(b_1,b_2,..., b_p)$ as the \emph{boost-weight} of the components \cite{Hervik10}. 
Since the ${\sf b}_i$'s are also a basis of ${\mf{a}}$, then $X=\sum_{i=1}^px_i{\sf b}_i$. Consequently, 
\[ \lim_{t\rightarrow\infty}e^{tX}\cdot\mu_{\sf b}=\lim_{t\rightarrow\infty}e^{t(x_1b_1+x_2b_2+...+x_pb_p)}\mu_{\sf b}= 0\]  
if and only if:
\[ x_1b_1+x_2b_2+...+x_pb_p< 0.\] 
By an appropriate scaling, and using necessary discrete transformations, we can choose ${\sf b}\in \mathbb{Z}^p$, and the components $x_i$ are reversely well-ordered, $x_1\geq x_2\geq...\geq x_p\geq 0$, and: 
\beq \label{eq:bweq}
x_1b_1+x_2b_2+...+x_pb_p\leq -1.
\eeq
This is the fundamental equation we need to show the Lie algebra can satisfy: If for some choice of set $[x_1, x_2, x_3, ...,x_p]$, all components of $\mu$ satisfy eq.(\ref{eq:bweq}), then it is in the null cone.  This set $[x_1, x_2, x_3, ...,x_p]$ will be referred to as the \emph{class} of the Lie algebra in the null cone, and gives you the $X\in\mf{a}$ providing the limit $\lim_{t\rightarrow\infty}e^{tX}\cdot\mu_{\sf b}=0$. Note that the class of a  Lie algebra is not unique, however, here we will be mostly interested in the \emph{existence} of  a Lie algebra in the null cone and for this providing one class would be sufficient. 

\section{Lie algebras of dimension $\leq 3$}
The only 1-dimensional example is $\R$ which is trivially in the null cone since it is the origin itself. In dimension 2 and 3 we can only have Lorentzian signatures $(1,1)$ and $(1,2)$. Thus, the only possible classes are $[1]$ and $[\tfrac 12]$. This implies that the only allowable structure constants are: 
\[ C^2_{~12}, ~C^3_{~13},~C^2_{~13}.\]
Table \ref{tab:dim3} summarises these Lie algebras. The Lie algebras \emph{not} in the null cone are the solvable $\mf{s}_{3,3}$, and the simple algebras $\mf{s}\mf{u}(2)$ and $\mf{s}\mf{l}(2,\R)$. 
\begin{table}[ht]
    \centering
    \begin{tabular}{|c||c|c|}
    \hline 
 Lie algebra & null cone & Comment\\
 \hline\hline 
$\mf{s}_{2,1}$  &  $C^2_{~12}=1$ & CS \\
\hline\hline
$\mf{s}_{2,1}\oplus\R$  &  $C^2_{~12}=1$ & CS  \\
\hline \hline
$\mf{n}_{3,1}$  &  $C^2_{~13}=1$ & Nilpotent  \\
\hline 
$\mf{s}_{3,1}$  &  $C^2_{~12}=1,~C^3_{13}=a, \quad 0<|a|\leq 1$ & CS  \\
\hline
$\mf{s}_{3,2}$  &  $C^2_{~12}=C^2_{~13}=C^3_{~13}=1$ & CS  \\
\hline
$\mf{s}_{3,3}$  &  $\notin$ & Not CS  \\
\hline
$\mf{s}\mf{u}(2)$& $\notin$ & semi-simple \\
\hline 
$\mf{s}\mf{l}(2,\R)$& $\notin$ & semi-simple \\
\hline 
    \end{tabular}
    \caption{List of non-trivial Lie algebras of dimension $\leq 3$. Indicated is also whether they are in the null cone or not. Examples of structure coefficients are given if they are an element of the null cone.}
    \label{tab:dim3}
\end{table}

\section{Lie algebras of dimension 4}
In dimensions 4, we can have both the Lorentzian case, $O(1,3)$ and the neutral case $O(2,2)$.  Here, when we refer to $e_\mu$ we refer to the basis given in \cite{SW}. For a basis with respect to the pseudo-Riemannian metric, we will just give the number '1', or '2' etc. 

The neutral case allows for most Lie algebras in its null cone, and we will 
consider the solvable algebras $\mf{s}_{4,1-11}$ first. For all of these we can identify $e_4$ with '1' so that 
\[ C^\mu_{~1\nu}=S^\mu_{~\nu},\]
which is a matrix representing the adjoint map with respect to vector '1'. 
For the matrix $S^\mu_{~\nu}$ the problematic entry is $S^3_{~2}$ which has boost weight $(0,1)$ in the neutral case. By choosing $e_{1}=$'2', then $S^3_{~2}=0$ and it is in the null cone for algebras $\mf{s}_{4,1-5}$.  For  $\mf{s}_{4,6-11}$ they have non-trivial nilradical which can be represented with $C^2_{~34}=1$. This has boost-weight $(-1,0)$ and would thus place these algebras in the null cone as well. For the not completely solvable Lie algebra $\mf{s}_{4,12}$ a specific choice can be made which places this in the null cone too. This choice is given in Table \ref{tab:dim4} which also summarises the 4-dimensional  Lie algebras. 

To be specific, the Lie algebra $\mf{s}_{4,12}$ can be realised by the left-invariant one-forms: 
\[ \theta^1=dw, ~~ \theta^2=dx +ydz + xdw, ~~\theta^3=dz, ~~ \theta^4=dy - xdz + ydw\] 
This is of class $[1,1]$ a  metric in the null cone is
\[ g=2\theta^1\theta^2+2\theta^3\theta^4,\]  
\and is in the null cone of the $O(2,2)$-action. 

The only 4-dimensional Lie algebra \emph{not} in the null cone of the $O(1,3)$ or $O(2,2)$-action is $\mf{s}\mf{u}(2)\oplus \R$.

\begin{table}[ht]
    \centering
    \begin{tabular}{|c||c|c|}
    \hline 
 Lie algebra & null cone & Comment\\
 \hline\hline 
$\mf{s}_{2,1}\oplus \R^2$  &  $C^2_{~12}=1$ & N/L, CS \\
\hline
$\mf{s}_{2,1}\oplus\mf{s}_{2,1}$  &  $C^2_{~12}=C^4_{~34}=1$ &N, CS  \\
\hline \hline
$\mf{n}_{3,1}\oplus \R$  &  $C^2_{~13}=1$ & N/L, Nilpotent  \\
\hline 
$\mf{s}_{3,1}\oplus \R$  &  $C^2_{~12}=1,~C^3_{13}=a, \quad 0<|a|\leq 1$ & N/L, CS  \\
\hline
$\mf{s}_{3,2}\oplus \R$  &  $C^2_{~12}=C^2_{~13}=C^3_{~13}=1$ & N/L, CS  \\
\hline
$\mf{s}_{3,3}\oplus \R$  &  $C^3_{~13}=C^4_{14}=\alpha, ~C^3_{14}=-C^4_{13}=1, \quad 0\leq\alpha$ &N, Not CS  \\
\hline
$\mf{s}\mf{u}(2)\oplus \R$& $\notin$ & \\
\hline 
$\mf{s}\mf{l}(2,\R)\oplus \R$& $C^1_{~13} =C^4_{~34} = 2,~ C^3_{14} = 1.$ & N\\
\hline \hline
$\mf{n}_{4,1}$  &  $C^2_{~13}=C^4_{~12}=1$ &  Nilpotent  \\
\hline 
$\mf{s}_{4,1-5}$  & $C^\mu_{~1\nu}=S^\mu_{~\nu}, ~S^3_{~2}=0$ & Nilradical $\R^3$\\
\hline
$\mf{s}_{4,6-11}$  & $C^\mu_{~1\nu}=S^\mu_{~\nu}, ~S^3_{~2}=0, ~C^2_{~34}=1$ &  Nilradical $\mf{n}_{3,1}$\\
\hline
$\mf{s}_{4,12}$  & $C^2_{~12}=C^4_{~14}=C^2_{~34}=C^4_{~23}=1$ & Nilradical $\R^2$ \\
\hline
    \end{tabular}
    \caption{List of non-trivial Lie algebras of dimension $4$. Indicated is also whether they are in the null cone or not. Examples of structure coefficients are given if they are an element of the null cone. N=neutral signature. L=Lorentzian signature.}
    \label{tab:dim4}
\end{table}

\section{Lie algebras of dimension 5}
For the decomposable Lie algebras, many can be read off the lower dimensional ones. For example, if $\mf{g}_4$ is a four-dimensional Lie algebra in the null cone, then the five-dimensional $\mf{g}_4\oplus\R$ would be in it too. Moreover, if $\mf{g}_3$ is a three-dimensional Lie algebra in the null cone, then $\mf{g}_3\oplus{\mf s}_{2,1}$ can be achieved by simply by appending the last two dimensions as a pair of non-orthogonal null directions. 
In Table \ref{tab:dim5} all the decomposable are given and some of their structure constants are given explicitly as examples in the null cone. 

The Lie algebra $\mf{s}\mf{l}(2,\R)\oplusrhrim \R^2$ requires some investigation. This is the only 5-dimensional Lie algebra with a non-trivial Levi decomposition. From \cite{nullcone} we know that $N^+\doplus H=\span\{2,4,5\}$ has to be nilpotent. Furthermore, we should also have that $C^2_{~54}$ is the only amongst these which should be non-zero. Indeed, this can be accomplished if we choose $\mf{s}\mf{l}(2,\R)=\span\{1,3,5\}$ and $\R^2=\span\{2,4\}$ with the following non-zero structure constants: 
\[ \mf{s}\mf{l}(2,\R)\oplusrhrim \R^2:~C^1_{~13}=C^5_{~35}=2, \quad C^3_{~15}=1, \qquad C^2_{~32}=C^4_{~43}=C^2_{~54}=C^4_{~21}=1. \] 
Thus, this is in the null cone and of class $[2,1]$. Interestingly, if you compute the Riemann tensor, it has only one independent component, and thus the Ricci tensor for this metric is remarkably simple: $\Ric=-6\theta^3\theta^3$,  
while the Killing operator is of the form $B={\bf 2}\oplus{\bf 3}$.

Consider the non-decomposable solvable algebras $\mf{s}_{5,\bullet}$ in the $O(2,3)$ case. For all of these we can identify $e_5$ with '1' so that 
\[ C^\mu_{~1\nu}=S^\mu_{~\nu}.\]
For the matrix $S^\mu_{~\nu}$ the problematic entries are $S^3_{~2}$ and $S^5_{~2}$. By choosing $e_{1}=$'2', then $S^3_{~2}=S^5_{~2}=0$ for all, except possibly for $\mf{s}_{5,13}$. This itself implies it is in the null cone for algebras $\mf{s}_{5,1-12}$. For $\mf{s}_{5,13}$ we can, in addition, choose $e_2=$'3' (or '4'). Then $\mf{s}_{5,13}$ is also in the null cone. 

For the algebras $\mf{s}_{5,14-32}$, they have an $\mf{n}_{3,1}$ nilradical. Choose, $C^2_{34}=1$ representing the nilradical, then $\mf{s}_{5,14-32}$ is in the null cone. 

For the algebras $\mf{s}_{5,33-43}$, they have an $\mf{n}_{4,1}$ nilradical. If we choose, $C^2_{34}=C^4_{~35}=1$ representing the nilradical, then $\mf{s}_{5,14-32}$ is in the null cone too. 

Finally, the remaining Lie algebras $\mf{s}_{5,44-45}$ they also have an $\mf{n}_{3,1}$ nilradical. A wiser choice is to identify $e_4$ as '1', $e_5$ as '3' and $e_1$ as '2'. Then to identify $e_2$ with '4' and $e_3$ with '5' would put $\mf{s_{5,44}}$ in the null cone. On the other hand,   $\mf{s_{5,45}}$ seems more difficult and it is a not a completely solvable Lie algebra since the adjoint map ${\rm Ad}_1$ has complex eigenvalues. The Lie algebra $\mf{s_{5,45}}$ is the Lie algebra of homotheties of the 3-dimensional nilgeometry ${\sf Nil}^3$. Viewed as horospheres of the complex hyperbolic plane ${\mathbb{H}}_\C^2$, the algebra is a solvable subalgebra of the Lie algebra of the isometry group $SU(2,1)$. Indeed, we can find this in the null cone also, for example, by chosing the following left-invariant null-frame on $\mf{s_{5,45}}$:
\beq\label{s545}
\mf{s_{5,45}}: \quad 
\begin{cases}
\theta^1 = \d v \\
\theta^2=e^{2w}\left(\d x+\tfrac 12(y\d z-z\d y)\right)\\
\theta^3 = \d w\\ 
\theta^4= e^{w}\left(\cos v\d y+\sin v\d z\right) \\
\theta^5= e^{w}\left(-\sin v\d y+\cos v\d z\right)
\end{cases}
\eeq
The structure constants in this case are:
\[ \mf{s_{5,45}}: ~C^2_{~23}=2, ~~C^2_{~45}=C^4_{~15}=C^4_{~34}=C^5_{~35}=-1,~~ C^5_{~14}=1,\]
and is therefore in the null cone of class $[2,1]$. 

Table \ref{tab:dim5} summarises the 5-dimensional  Lie algebras, and in particular: \emph{All 5-dimensional solvable Lie algebras are in the null cone of the $O(1,4)$ or the $O(2,3)$-action. }
 
\begin{table}[ht]
    \centering
    \begin{tabular}{|c||c|c|}
    \hline 
 Lie algebra & null cone & Comment\\
 \hline\hline 
$\mf{s}_{2,1}\oplus \R^3$  &  $C^2_{~12}=1$ &  CS \\
\hline
$\mf{s}_{2,1}\oplus\mf{s}_{2,1}\oplus\R$  &  $C^2_{~12}=C^4_{~34}=1$ &CS  \\
\hline \hline
$\mf{n}_{3,1}\oplus \R^2$  &  $C^2_{~13}=1$ &  Nilpotent  \\
\hline 
$\mf{s}_{3,1-3}\oplus \R^2$  &  See table \ref{tab:dim4} &  \\
\hline
$\mf{s}\mf{u}(2)\oplus \R^2$& $\notin$ & \\
\hline 
$\mf{s}\mf{l}(2,\R)\oplus \R^2$& $C^1_{~13} =C^4_{~34} = 2,~ C^3_{14} = 1.$ & \\
\hline \hline
$\mf{n}_{3,1}\oplus \mf{s}_{2,1}$  &  $C^2_{~15}=1$, $C^{4}_{~34}=1$ &  Nilpotent$\oplus\mf{s}_{2,1}$  \\
\hline 
$\mf{s}_{3,1}\oplus \mf{s}_{2,1}$  &  $C^2_{~12}=1,~C^5_{15}=a,~C^4_{~34}=1, \quad 0<|a|\leq 1$ & CS  \\
\hline
$\mf{s}_{3,2}\oplus \mf{s}_{2,1}$  &  $C^2_{~12}=C^2_{~15}=C^5_{~15}=1, ~C^4_{~34}=1$ & CS  \\
\hline
$\mf{s}_{3,3}\oplus \mf{s}_{2,1}$  &  $C^2_{~12}=C^4_{14}=\alpha, ~C^4_{12}=-C^2_{14}=1, ~C^5_{~35}=1, ~\quad 0\leq\alpha$ & Not CS  \\
\hline
$\mf{s}\mf{u}(2)\oplus \mf{s}_{2,1}$& $\notin$ & \\
\hline 
$\mf{s}\mf{l}(2,\R)\oplus \mf{s}_{2,1}$& $\notin$ & \\
\hline \hline
$\mf{s}\mf{l}(2,\R)\oplusrhrim \R^2$& $C^1_{~13}=C^5_{~35}=2,~~C^3_{~15}=C^2_{~32}=C^4_{~43}=C^2_{~54}=C^4_{~21}=1.$ & LD\\
\hline \hline
$\mf{n}_{4,1}\oplus\R$  &  $C^2_{~13}=C^4_{~12}=1$ & Nilpotent  \\
\hline 
$\mf{s}_{4,1-12}\oplus\R$  & See table \ref{tab:dim4} & \\
\hline\hline 
$\mf{n}_{5,1-6}$  & $\in$ & Nilpotent \\
\hline
$\mf{s}_{5,1-13}$  &  $\in$, see text. & Nilradical $\R^4$\\
\hline
$\mf{s}_{5,14-32}$  &  $\in$, see text.  & Nilradical $\mf{n}_{3,1}\oplus\R$\\
\hline 
$\mf{s}_{5,33-38}$  & $\in$, see text. & Nilradical $\mf{n}_{4,1}$ \\
\hline
$\mf{s}_{5,39-43}$  & $\in$, see text. & Nilradical $\R^3$ \\
\hline
$\mf{s}_{5,44}$  & $\in$, see text. & Nilradical $\mf{n}_{3,1}$ \\
\hline
$\mf{s}_{5,45}$  & $\in$, see text. &  Nilradical $\mf{n}_{3,1}$\\
\hline
\hline
    \end{tabular}
    \caption{List of non-trivial Lie algebras of dimension $5$. Indicated is also whether they are in the null cone or not. Examples of structure coefficients are given if they are an element of the null cone. LD=non-trivial Levi decomposition. }
    \label{tab:dim5}
\end{table}

Let us consider the Lie algebras $\mf{s}\mf{l}(2,\R)\oplusrhrim \R^2$  and $\mf{s_{5,45}}$ in a bit more detail. We  have the metric 
\[ g=2\theta^1\theta^2+\theta^3\theta^4+(\theta^5)^2.\]
It may not look obvious that these are indeed VSI metrics. Let us therefore prove this using invariant-preserving diffeomorphisms as well. 
First, consider $\mf{s_{5,45}}$ where  it is convenient to introduce a different set of variables: 
\[ \begin{cases}
W=w \quad 
X=xe^{2w}, \quad  
V=v,\\
Y=e^w(y\cos v+z\sin v),\quad 
Z=e^w(-y\sin v+z\cos v).
\end{cases}\]
The metric can now be written
\beq g&=&2dV\left[dX-2XdW+\tfrac 12(Y^2+Z^2)dV+\tfrac 12(YdZ-ZdY)\right] \nonumber \\
&& +2dW(dY-YdW-ZdV)+(dZ-ZdW+YdV)^2. 
\eeq
Second, consider  the diffeomorphism (recall it is of class $[2,1]$): 
\[ \phi_\lambda: ~(V,X,W,Y,Z)\mapsto (Ve^{-2\lambda},Xe^{2\lambda},We^{-\lambda},Ye^{\lambda},Z)\]
so that 
\beq \phi_{\lambda}^*g=g_\lambda&=&2dV\left[dX-2XdWe^{-\lambda}+\tfrac 12(Y^2e^{-2\lambda}+Z^2e^{-4\lambda})dV+\tfrac{e^{-\lambda}}2(YdZ-ZdY)\right] \nonumber \\
&& +2dW(dY-e^{-\lambda}YdW-e^{-3\lambda}ZdV)+(dZ-e^{-\lambda}ZdW+e^{-\lambda}YdV)^2. 
\eeq
Since this is a homogeneous space all invariants will be constants, and hence, for any curvature invariant $I$: $\phi^*_{\lambda}I=I$. 
Then considering the limit 
\[ \lim_{\lambda\rightarrow\infty}\phi_{\lambda}^*g=\lim_{\lambda\rightarrow\infty}g_\lambda=2dVdX+2dWdY+dZ^2,\]
which is flat space. Consequently, the $\phi_\lambda$ which is an invariant-preserving diffeomorphism, proves that the pseudo-Riemannian space defined by the left-invariant frame (\ref{s545}) is indeed a VSI space.  

Now, consider $\mf{s}\mf{l}(2,\R)\oplusrhrim \R^2$ where we can find a left-invariant frame: 
\beq\label{sl2rr2}
\mf{s}\mf{l}(2,\R)\oplusrhrim \R^2: \quad 
\begin{cases}
\theta^1 = \d z-2z\d x \\
\theta^2=\d w+w\d x+y[(wz+v)\d x+z\d w-dv]\\
\theta^3 = \d x(1+2yz)-y\d z\\ 
\theta^4= \d v-z\d w-(v+wz)\d x \\
\theta^5= \d y+2y\d x(1+yz)-y^2\d z
\end{cases}
\eeq
The following diffeomorphism will now do the trick: 
\[  \phi_\lambda: ~(z,w,x,v,y)\mapsto (e^{-2\lambda}z,e^{2\lambda}w,e^{-\lambda}x,e^\lambda v,y),\]
so that
 \beq \lim_{\lambda\rightarrow\infty}\phi_{\lambda}^*g=\lim_{\lambda\rightarrow\infty}g_\lambda&=&2dzdw+2dxdv+(dy)^2. 
 \eeq
 Therefore, this metric on $\mf{s}\mf{l}(2,\R)\oplusrhrim \R^2$ is a VSI space. One can also see that this diffeomorphism brings the structure constants to $0$, and hence, $\mu$ is indeed in the null cone as well. Note also that these metrics explicitly confirm that these classes of metrics are in the more general class of $\mathcal{I}$-degenerate metrics studied in \cite{Hervik14}.

\section{Lie algebras of dimension 6}

For the direct sum decomposable Lie algebras, it is trivial that for all 5-dimensional Lie algebra $\mf{g}$ being in the null cone, then also $\mf{g}\oplus \R$ will be in the null cone. Hence, it is sufficient to consider those 6-dimensional Lie algebras not being of this type. 

\subsection{Solvable Lie algebras of dimension 6}
It is therefore sufficient to consider solvable (including nilpotent) Lie algebras of the following form: 
$\mf{s}_{2,1}\oplus\mf{s}_{2,1}\oplus\mf{s}_{2,1}$, $\mf{s}_{4,\bullet}\oplus\mf{s}_{2,1}$, $\mf{s}_{3,\bullet}\oplus\mf{s}_{3,\bullet}$ and $\mf{s}_{6,\bullet}$. 

\paragraph{Lie algebras $\mf{s}_{2,1}\oplus\mf{s}_{2,1}\oplus\mf{s}_{2,1}$, $\mf{s}_{4,\bullet}\oplus\mf{s}_{2,1}$:} For the first two categories, we can simply use the four-dimensional solvable Lie algebra (they are all in the null cone) and append a 2-dimensional $\mf{s}_{2,1}$: $C^6_{~56}=1$. Hence, all of the solvable (including nilpotent) Lie algebras of type $\mf{s}_{2,1}\oplus\mf{s}_{2,1}\oplus\mf{s}_{2,1}$, $\mf{s}_{4,\bullet}\oplus\mf{s}_{2,1}$ are in the null cone. 

\paragraph{Lie algebras $\mf{s}_{3,\bullet}\oplus\mf{s}_{3,\bullet}$:} For the nilpotent case, $\mf{g}=\mf{n}_{3,1}$, then $\dim([\mf{g},\mf{g}])=1$; and if $\mf{g}$ any of the other 3-dim solvable algebras then $\dim([\mf{g},\mf{g}])=2$. If one of the algebras is nilpotent, then choose $\span\{e_2\}=[\mf{n}_{3,1},\mf{n}_{3,1}]$, and $\span\{e_2,e_4,e_6\}=\mf{n}_{3,1}$. For the other algebra, choose $\span\{e_1\}=\mf{g}/[\mf{g},\mf{g}]$ if $\mf{g}$ is solvable, or $\span\{e_5\}=[\mf{n}_{3,1},\mf{n}_{3,1}]$ it is nilpotent. Then in the  class $[3,1,1]$, $\mf{s}_{3,\bullet}\oplus\mf{s}_{3,\bullet}$ is in the null cone. 

For $\mf{s}_{3,\bullet}\oplus\mf{s}_{3,\bullet}$ where both are of the solvable algebras $\mf{s}_{3,1-3}$. Then choose $\span\{e_1\}=\mf{s}_{3,\bullet}/[\mf{s}_{3,\bullet},\mf{s}_{3,\bullet}]$ and  $\span\{e_1,e_5,e_6\}=\mf{s}_{3,\bullet}$, for one, and $\span\{e_3\}=\mf{s}_{3,\bullet}/[\mf{s}_{3,\bullet},\mf{s}_{3,\bullet}]$ and  $\span\{e_3,e_4,e_2\}=\mf{s}_{3,\bullet}$ for the other. Then this would be in the null cone of class $[3,3,1]$. 

\paragraph{Lie algebras $\mf{s}_{6,\bullet}$:} For  nilpotent 6-dimensional indecomposable Lie algebras $\mf{n}_{6,1-22}$, they are all in the null cone since any nilpotent Lie algebra is \cite{nullcone}. 

The solvable Lie algebras $\mf{s}_{6,1-242}$ can be split into classes according to their nilradical.  Here, when we refer to $e_\mu$ we refer to the basis given in \cite{SW}. For a basis with respect to the pseudo-Riemannian metric, we will just give the number '1', or '2' etc. 
\begin{itemize}
\item{$\mf{s}_{6,1-21}$, nilradical $\R^5$:} Here, choose the $e_6$ to correspond to '1'. Then  the adjoint w.r.t. the vector '1' is ${\rm Ad}_{1}=(C^\mu_{~1\nu})=(S^\mu_{~\nu})$. If we set $e_1=$'2' then this would put all these algebras in the null cone except possibly $\mf{s}_{6,16}$ and $\mf{s}_{6,21}$ (both are not completely solvable). For  these  we should  set $e_5=$'2' instead. Then all of these would be in the null cone in class $[1,1,1]$. 
\item{$\mf{s}_{6,22-90}$, nilradical $\mf{n}_{3,1}\oplus\R^2$:}
Again define $e_6=$'1' and the adjoint w.r.t. the vector '1' is ${\rm Ad}_{1}=(C^\mu_{~1\nu})=(S^\mu_{~\nu})$. The nilradical can now be defined by $C^2_{~46}=1$. Hence, we need to consider class $[3,1,1]$. All solvable Lie algebras $\mf{s}_{6,22-90}$ now fall into the null cone. 
\item{$\mf{s}_{6,91-116}$, nilradical $\mf{n}_{4,1}\oplus\R$:} 
Again define $e_6=$'1' and the adjoint w.r.t. the vector '1' is ${\rm Ad}_{1}=(C^\mu_{~1\nu})=(S^\mu_{~\nu})$. Define the nilradical as $C^2_{~46}=C^4_{~56}=1$. In class $[3,1,1]$ the Lie algebras $\mf{s}_{6,91-116}$ will be in the null cone. 

\item{$\mf{s}_{6,117-149}$, nilradical $\mf{n}_{5,1}$:}
Again define $e_6=$'1' and the adjoint w.r.t. the vector '1' is ${\rm Ad}_{1}=(C^\mu_{~1\nu})=(S^\mu_{~\nu})$. Define the nilradical as $C^2_{~65}=C^4_{~35}=1$. Then all of these are in the class $[1,1,1]$ of the null cone. Note that for the Lie algebras $\mf{s}_{6,134}$, $\mf{s}_{6,145}$ and $\mf{s}_{6,147}$, are not completely solvable with '1' not being an eigenvalue of ${\rm Ad}_1$. However, by this choice the structure constants $C^4_{~12}$ and $C^2_{~14}$ are still allowable within the $[1,1,1]$ class. 

\item{$\mf{s}_{6,150-157}$, nilradical $\mf{n}_{5,2}$:}
Again define $e_6=$'1' and the adjoint w.r.t. the vector '1' is ${\rm Ad}_{1}=(C^\mu_{~1\nu})=(S^\mu_{~\nu})$. Define the nilradical as $C^2_{~63}=C^4_{~65}=C^6_{~35}=1$. Then all of these are in the class $[1,1,1]$ and in the null cone. 
\item{$\mf{s}_{6,158-182}$, nilradical $\mf{n}_{5,3}$:}
Again define $e_6=$'1' and the adjoint w.r.t. the vector '1' is ${\rm Ad}_{1}=(C^\mu_{~1\nu})=(S^\mu_{~\nu})$. Define the nilradical as $C^2_{~46}=C^2_{~35}=1$. Then this is in the class $[3,1,1]$ of the null cone. 
\item{$\mf{s}_{6,183-189}$, nilradical $\mf{n}_{5,4}$:}
Define $e_6=$'1' and the adjoint w.r.t. the vector '1' is ${\rm Ad}_{1}=(C^\mu_{~1\nu})=(S^\mu_{~\nu})$. Define the nilradical as $C^2_{~45}=C^2_{~36}=C^4_{~56}=1$. Then they are in the class $[3,1,1]$ in the null cone. 
\item{$\mf{s}_{6,190-196}$, nilradical $\mf{n}_{5,5}$:}
Define $e_6=$'1', the adjoint w.r.t. the vector '1' is ${\rm Ad}_{1}=(C^\mu_{~1\nu})=(S^\mu_{~\nu})$, and  define the nilradical as $C^2_{~34}=C^4_{~36}=C^6_{~35}=1$. Then this is of class $[1,1,1]$ and in the null cone. 

\item{$\mf{s}_{6,197}$, nilradical $\mf{n}_{5,6}$:}
Define $e_6=$'1', the adjoint w.r.t. the vector '1' is ${\rm Ad}_{1}=(C^\mu_{~1\nu})=(S^\mu_{~\nu})$, and  define the nilradical as $C^2_{~34}=C^4_{~36}=C^6_{~35}=C^2_{~56}=1$. Then this is also of class $[1,1,1]$ and in the null cone. 

\item{$\mf{s}_{6,198-228}$, nilradical $\R^4$:}
Here, let us identify $\{e_6,e_5,e_4,e_3, e_2, e_1\}=\{1, 3, 5, 6, 4, 2\}$. For the class $[3,3,1]$, this places all of these Lie algebras in the null cone. 

\item{$\mf{s}_{6,229-241}$, nilradical $\mf{n}_{3,1}\oplus\R$:}
Here, we do the same identification $\{e_6,e_5,e_4,e_3, e_2, e_1\}=\{1, 3, 5, 6, 4, 2\}$ so that the nilradical is represented with $C^2_{~46}=1$. Considering the class $[3,1,1]$ these Lie algebras are then all in the null cone. 
\item{$\mf{s}_{6,242}$, nilradical $\mf{n}_{4,1}$:} It was shown in \cite{nullcone} that this is in the null cone of class $[2,1]$ of the $O(2,4)$-action. 
\end{itemize}

To summarize: \emph{All 6-dimensional solvable Lie algebras are in the null cone of the $O(3,3)$-action.}

\subsection{Direct sum decomposable Lie algebras of dimension 6}
 We will not consider the Lie algebras of the form $\mf{g}\oplus \R$ where  the 5-dimensional Lie algebra $\mf{g}$ is in the null cone, since it trivially will be in the null cone by trivally appending a dimension. Henceforth, only the ones $\mf{g}\oplus \R$ where $\mf{g}$ is not in the null cone, will be considered. The list of remaining decomposable Lie algebras of dimension 6 are then: 
 \begin{enumerate}
 \item{$\mf{s}\mf{u}(2)\oplus \R^3$:} This is in the null cone, as shown by Markestad \cite{Markestad}. It is of class $[1,1,1]$. 
 \item{$\mf{s}\mf{u}(2)\oplus \mf{s}_{2,1}\oplus\R$:} Not in the null cone. 
 \item{$\mf{s}\mf{l}(2,\R)\oplus \mf{s}_{2,1}\oplus\R$:} In the null cone since we can consider the 4-dimensional neutral metric of $\mf{s}\mf{l}(2,\R)\oplus \R$, and append it with $\mf{s}_{2,1}$: $C^6_{~56}=1$. 
 
 \item{($\mf{s}\mf{l}(2,\R)\oplusrhrim \R^2)\oplus\R$} In the null cone as shown in \cite{nullcone}. Indeed, due to the fact that the $\mf{s}\mf{l}(2,\R)\oplusrhrim \R^2$ is in the $O(2,3)$-action, this 6-dimensional Lie algebra will be both in the null cone of the $O(2,4)$-action and the $O(3,3)$-action. 

 \item{$\mf{s}\mf{u}(2)\oplus \mf{s}_{3,\bullet}$:}These do not seem to be in the null cone, not even for $\mf{n}_{3,1}$. 
\item{$\mf{s}\mf{l}(2,\R)\oplus \mf{s}_{3,\bullet}$:} The case $\mf{s}\mf{l}(2,\R)\oplus \mf{n}_{3,1}$ was shown to be in the null cone in \cite{nullcone}. For the proper solvable ones, $\mf{s}\mf{l}(2,\R)\oplus \mf{s}_{3,1-3}$, we can choose the following $\mf{s}\mf{l}(2,\R)=\span\{1,5,6\}$, and let $\mf{s}_{3,1-3}=\span\{2,3,4\}$, so that the possible non-zero structure constants are: 
\[ C^1_{~51}, ~C^6_{~56}, ~C^5_{~16}, \quad C^a_{~3b}, ~a,b=2,4.\]
Under the unusual class $[3,2,1]$, these Lie algebras are all in the null cone. 
\end{enumerate} 

\subsection{Non-trivial Levi decomposable Lie algebras of dimension 6}
There are also some non-trivial Levi-decomposable Lie algebras. They are listed in Table \ref{NTLD6} and all were found to be in the null cone in earlier works.   
\begin{table}[ht]
    \centering
    \begin{tabular}{|c||c|c|}
    \hline 
 Lie algebra & null cone & Comment\\
 \hline\hline 
$\mf{s}\mf{u}(2)\oplusrhrim \R^3$& $\in$ & Shown in \cite{Markestad}, of class $[1,1,1]$.\\
\hline 
$\mf{s}\mf{l}(2,\R)\oplusrhrim \R^3$& $\in$ & Shown in \cite{Markestad}, of class $[1,1,1]$\\
\hline 
$\mf{s}\mf{l}(2,\R)\oplusrhrim \mf{n}_{3,1}$& $\in$ & Shown in \cite{nullcone}, of class $[2,1]$ \\
\hline
$\mf{s}\mf{l}(2,\R)\oplusrhrim \mf{s}_{3,1, a=1}$& $\in$ & Shown in \cite{nullcone}, of class $[3,1,1]$ \\
\hline 
\end{tabular}
\caption{A list of non-trivial Levi decomposable Lie algebras of dimension 6.}\label{NTLD6}
\end{table}

\subsection{Semi-simple 6-dimensional Lie algebras}
None of these are in the null cone. The semi-simple Lie algebras of dimension 6 are $\mf{s}\mf{l}(2,\R)\oplus\mf{s}\mf{l}(2,\R)$, $\mf{s}\mf{u}(2)\oplus\mf{s}\mf{l}(2,\R)$, $\mf{s}\mf{u}(2)\oplus\mf{s}\mf{u}(2)$ and $\mf{s}\mf{l}(2,\C)$ (as a real Lie algebra). 

\section{Lie algebras with semi-simple subalgebras}

Markestad showed that for a semi-simple Lie algebra $\mf{g}$ the Lie algebras of type $\mf{g}\oplus\R^{p+k}$, where $\dim(\mf{g})=p$ is in the null cone of the $O(p,p+k)$ action. This can be seen by choosing $N_-=\mf{g}$. Furthermore, since $N_+\doplus H$, has to be nilpotent, then the Iwasawa decomposition will indicate a lower bound on the total dimension \cite{Knapp}: 
\[ \mf g = \mf k \doplus \mf a \doplus \mf n, \]
where $\mf k$ is compact, $\mf a$ is the maximal abelian Cartan subalgebra, and $\mf n$ is nilpotent. 

Here, we will investigate Lie algebras $\mf{g}\oplus\R^m$ where $\mf{g}$ is semi-simple and try to find examples with smallest possible dimension of $\R^{m}$. In all the examples given $k=0$ so we are considering the neutral case. In this pursuit  the restricted-root decomposition will also be useful \cite{Knapp}: 
\[ {\mf g}={\mf g}_0\doplus \underset{\lambda\in\Sigma}{ \dot\bigoplus \hspace{0.1cm}}{\mf g}_\lambda,\] 
where $\Sigma$ is the set of roots. In our case, we note that the nilpotent subalgebra ${\mf n}=\dot\bigoplus_{\lambda\in\Sigma^+}{\mf g}_\lambda$ given by the set of positive roots will form the basis for a subspace of  $N_+$. Now, in the following, we will use the courser decomposition defined by the lower central series: 
\[ {\mf n}_1:={\mf n}, \quad {\mf n}_{\lambda+1}:=[{\mf n},{\mf n}_\lambda],  \]
where ${\mf g}_\lambda:={\mf n}_{\lambda}/{\mf n}_{\lambda+1}$. Based on the restricted-root space decomposition, we can in general use the $Z$-grading of the Lie algebra: 
\[ {\mf g}= \dotbigoplus{\lambda=-\Delta}{\Delta}{\mf g}_\lambda,\] 
where the negative-valued $\lambda$'s correspond to the negative roots, and the ${\mf g}_\lambda$'s are the  gradation 
of the Lie algebra. 

We will now study Lie algebras with semi-simple subalgebras of the form $\mf{g}\oplus\R^{m}$. The first examples will be found by using a 'brute force' method where we check each root and its commutator relations. In this case the root space decomposition is useful and we give  examples in lower dimensions. Then, when proceeding to the exceptional Lie algebra ${\mf f}_4$ we provide with a method which can be generalised to all split Lie algebras, and then show how it can be further generalised to \emph{all} semi-simple Lie algebras. 

\subsection{Split $A_n$, $SL(n+1,\R)$}
For split $A_1$ and $A_2$ examples were given in \cite{nullcone}. Here it was shown that $\mf{s}\mf{l}(2,\R)\oplus \R$ and  $\mf{s}\mf{l}(3,\R)\oplus \R^2$ of dimension 4 and 10 respectively, are in the null cone of the $O(2,2)$ and  $O(5,5)$. When studying the split semi-simple algebras the Hasse diagrams prove to be very useful. For $A_n$ they are particularly simple, and using the \LaTeX-package 'lie-hasse' they can easily been displayed. For example, for $A_3$, $A_4$ and $A_5$ they are: \\
$~~A_3:\quad $\begin{dynkinDiagram}[edge length=1cm]{A}{3}
\hasse{A}{3}
\end{dynkinDiagram},
$~~A_4:\quad $\begin{dynkinDiagram}[edge length=1cm]{A}{4}
\hasse{A}{4}
\end{dynkinDiagram},
$~~A_5:\quad $\begin{dynkinDiagram}[edge length=1cm]{A}{5}
\hasse{A}{5}
\end{dynkinDiagram}\\

These diagrams illustrate the positive root diagrams of the simple Lie algebras. These diagrams themselves give directly the nilpotent subalgebra from the Iwasawa-decomposition for the split groups (the diagrams for the complex Lie algebras and the split are the same). For simplicity, we will denote a positive root by a sequence of numbers, e.g., $(0,1,1,1)=0\alpha_1+1\alpha_2+1\alpha_3+1\alpha_4$, where $\alpha_i$ are the simple roots. There is also a set of negative roots, which will be denoted $(0,-1,-1,-1)$, etc. We will also denote $H_i$ as the elements of the Cartan subalgebra. 

The following tables give the cases $\mf{s}\mf{l}(3,\R)\oplus \R^2$, $\mf{s}\mf{l}(4,\R)\oplus \R^3$ and $\mf{s}\mf{l}(5,\R)\oplus \R^4$ showing they are all in the null cone. Since these are the split Lie algebras, their complexified versions can also be read off these tables. 

\subsubsection*{$\mf{s}\mf{l}(3,\R)\oplus \R^2$, dimension 10:}

\begin{tabular}{c||c|c|c|c|c|}
\hline
Class  &{\bf5}&{\bf3}&{\bf3}&{\bf1}&{\bf 1 }\\
 \hline
$\theta^\#$& 2& 4& 6& 8& 10 \\
$N_+$& $\mathbb{R}$  & $\mathbb{R}$ & $(1,1)$ & $(1,0)$ & $(0,1)$ \\
\hline 
$N_-$ & $(-1,-1)$ & $(0,-1)$ & $(-1,0)$ & $H_1$ & $H_2$  \\ 
$\theta^\#$& 1& 3& 5& 7& 9 \\
\hline
\end{tabular}\\
This table can also be used for the complex version $\mf{s}\mf{l}(3,\C)\oplus \C^2$, considered as a real Lie algebra of dimension 20. In this case all the dimension will be doubled and we will get an example of class $[5,5,3,3,3,3,1,1,1,1]$: for all the roots, $(x,y)$, add also $(ix,iy)$.

\subsubsection*{$\mf{s}\mf{l}(4,\R)\oplus \R^3$, dimension 18:}
\begin{tabular}{c||c|c|c|c|c|c|c|c|c|c|}
\hline
Class  &{\bf7}&{\bf5}&{\bf5}&{\bf3}&{\bf3}&{\bf3}&{\bf1}&{\bf 1 }&{\bf 1}\\
 \hline
$\theta^\#$& 2& 4& 6& 8& 10& 12& 14& 16 & 18 \\
$N_+$& $\mathbb{R}$  & $\mathbb{R}$& $\R$ & $(1,1,1)$ & $(0,1,1)$ & $(1,1,0)$ & $($1,0,0)$$ & $$(0,1,0)$$ & $$(0,0,1)$$ \\
\hline 
$N_-$ & $$(-1,-1,-1)$$ & $$(0,-1,-1)$$ & $$(-1,-1,0)$$ & $$(-1,0,0)$$  & $$(0,0,-1)$$ & $$(0,-1,0)$$ & $H_1$ & $H_2$ & $H_3$ \\ 
$\theta^\#$& 1& 3& 5& 7& 9& 11& 13& 15  & 17\\
\hline
\end{tabular}\\
Again, this table can also be used for the complex version $\mf{s}\mf{l}(4,\C)\oplus \C^3$, considered as a real Lie algebra of dimension 36 as explained above.  

\subsubsection*{$\mf{s}\mf{l}(5,\R)\oplus \R^4$, dimension 28:}
\begin{tabular}{c||c|c|c|c|c|c|c|c|}
\hline
Class  &{\bf11}&{\bf7}&{\bf7}&{\bf7}&{\bf7}&{\bf5}&{\bf5}\\
 \hline
$\theta^\#$& 2& 4& 6& 8& 10& 12& 14  \\
$N_+$& $\mathbb{R}$  & $\mathbb{R}$& $\R$ & $\R$ & (1,1,1,1) & (0,1,1,1) & (1,1,1,0)  \\
\hline 
$N_-$ & (-1,-1,-1,-1) & (-1,-1,-1,0) & (0,-1,-1,-1) & (-1,-1,0,0)  & (0,0,-1,-1) & (0,-1,-1,0) & (-1,0,0,0)    \\ 
$\theta^\#$& 1& 3& 5& 7& 9& 11& 13\\
\hline
\end{tabular}\\

$\qquad \cdots$\quad \begin{tabular}{|c|c|c|c|c|c|c|c|}
\hline
{\bf3}&{\bf3}&{\bf3}&{\bf1}&{\bf1}&{\bf 1 }&{\bf 1}\\
 \hline
 16& 18& 20& 22& 24& 26 & 28 \\
  (0,0,1,1) & (1,1,0,0) & (0,1,1,0) & (1,0,0,0) & (0,1,0,0)$$ & $$(0,0,1,0)$$ & $$(0,0,0,1)$$ \\
\hline 
   $$(0,0,0,-1)  & $$(0,0,-1,0)$$ & $$(0,-1,0,0)$$ & $H_1$ & $H_2$ & $H_3$ & $H_4$\\ 
  15& 17& 19& 21& 23& 25  & 27\\
\hline
\end{tabular}\\
Again, table can also be used for the complex version $\mf{s}\mf{l}(5,\C)\oplus \C^4$, considered as a real Lie algebra of dimension 56 as explained above.  

\subsection{Split $G_2 \times \mathbb{R}^2$}
Here we will consider the Lie algebra split $\mf{g}_2 \oplus \mathbb{R}^2$. It is useful to use the Hasse diagram of $\mf{g}_2$ showing the positive root system:\\
$G_2:\qquad $\begin{dynkinDiagram}[edge length=1cm]{G}{2}
\hasse{G}{2}
\end{dynkinDiagram}\\

The following table shows that we can find this Lie algebra in the null cone belonging to the 16-dimensional class $[15,11,9,7,5,3,1,1]$:\\
\begin{tabular}{c||c|c|c|c|c|c|c|c|c|}
\hline
Class  &{\bf15}&{\bf11}&{\bf9}&{\bf7}&{\bf5}&{\bf3}&{\bf1}&{\bf 1 }\\
 \hline
$\theta^\#$& 2& 4& 6& 8& 10& 12& 14& 16  \\
$N_+$& $\mathbb{R}$  & $\mathbb{R}$ & $(2,3)$ & $(1,3)$ & $(1,2)$ & $(1,1)$ & $(1,0)$ & (0,1) \\
\hline 
$N_-$ & $(-2,-3)$ & $(-1,-3)$ & $(-1,-2)$ & $(-1,-1)$  & $(-1,0)$ & $(0,-1)$ & $H_1$ & $H_2$  \\ 
$\theta^\#$& 1& 3& 5& 7& 9& 11& 13& 15  \\
\hline
\end{tabular}
\vspace{0.2cm}

\noindent We will later use a more general construction where we get another metric of a different class which is also in the null cone. 
\subsection{Split $F_4 \times \mathbb{R}^{4}$}
Here we will consider the Lie algebra split $\mf{f}_4 \oplus \mathbb{R}^4$. The Hasse diagram of $\mf{f}_4$ showing the positive root system is: \\
$F_4:\qquad $
\begin{dynkinDiagram}[edge length=1cm]{F}{4}
\hasse{F}{4}
\end{dynkinDiagram}\\
Let us consider a construction for the split $\mf{f}_4 \oplus \mathbb{R}^4$ which can be generalised to any split simple Lie algebra. From the Hasse diagram we see that the positive roots can be decomposed as $\mf{g}_{1}\oplus\mf{g}_{2}\oplus\cdots\oplus\mf{g}_{11}$. The sequence of dimensions of these subspaces is 4, 3, 3, 3, 3, 2, 2, 1, 1, 1, 1. Similarly, the negative roots can be decomposed in the same way. Thus, we can give the root-space decomposition of $\mf{f}_4$, which again gives a $Z$-grading of the Lie algebra $\mf{f}_4$:
\[ \mf{f}_4=\mf{g}_{-11}\doplus\mf{g}_{-10}\doplus\cdots\doplus\mf{g}_{-1}\doplus\mf{a}\doplus\mf{g}_{1}\doplus\mf{g}_{2}\doplus\cdots\doplus\mf{g}_{11}=\dotbigoplus{\lambda=-11}{11}\mf{g}_\lambda,\]
where $\mf{g}_0=\mf{a}$ and $\doplus$ means direct sum as a vector space, but not (necessarily) Lie algebra direct sum. Indeed, we have 
\[ [\mf{g}_\lambda,\mf{g}_\sigma]\begin{cases} \subset\mf{g}_{\lambda+\sigma}, & |\lambda+\sigma|\leq 11.\\
=0, & |\lambda+\sigma|>11
\end{cases} \] 

We now consider the following scheme of pairing (if $\dim\mf{g}_\lambda<\dim\mf{g}_{-\lambda+1}=\dim \mf g_{\lambda-1}$ we $\oplus\R$ so that the dimensions match):\\
\begin{tabular}{c||c|c|c|c|c|c|c|c|}
\hline
Class  &{\bf 23} &{\bf 21} &$\cdots$ & $2\lambda+1$&$2\lambda-1$ & $\cdots$ &{\bf 3}&{\bf 1 }\\
 \hline
$N_+$& $\mathbb{R}$  & $\mf{g}_{11}$ & $\cdots$ & $\mf{g}_{\lambda+1}(\oplus\R)$ & $\mf{g}_{\lambda}(\oplus\R)$ & $\cdots$ & $\mf{g}_{2}\oplus\R$ & $\mf{g}_{1}$ \\
\hline 
$N_-$ & $\mf{g}_{-11}$ & $\mf{g}_{-10}$ & $\cdots$ & $\mf{g}_{-\lambda}$  & $\mf{g}_{-\lambda+1}$ & $\cdots$ & $\mf{g}_{-1}$ & $\mf{a}$  \\ 
\hline
\end{tabular}\\
Thus, $N_-=\dot\bigoplus_{\lambda=0}^{11}\mf{g}_{-\lambda}$, and $N_+=\R^4\oplus\dot\bigoplus_{\lambda=1}^{11}\mf{g}_{\lambda}$. 
We can now see that the following inequality is satisfied for all non-zero structure constants: 
\[ 23b_1+21b_2+\cdots (25-2i)b_i\cdots +b_{n}\leq -1\]
Indeed, by $[\mf{g}_\lambda,\mf{g}_\sigma] \subset\mf{g}_{\lambda+\sigma}$, the left hand side of the equation gives 
\[(2\lambda-1)+(2\sigma-1)-(2(\lambda+\sigma)-1)=-1,\] 
regardless of the sign of $\lambda$, or whether $\lambda<\sigma$ or not. Hence, the null cone requirement is indeed satisfied. This shows that Split $\mf{f}_4 \oplus \mathbb{R}^{4}$ is in the null cone of the $O(28,28)$-action. 

\subsection{Split $G\times\R^m$, $G$ simple}
Using the above construction, we can now show that: 
\begin{thm}
    Given one of the simple split Lie algebras $\mf{g}$. Assume that the Cartan subalgebra $\mf{a}$ is of dimension $m$. Then the Lie algebra $\mf{g}\oplus \R^m$ is in the null cone of the $O(p,p)$ action where $\dim(\mf{g}\oplus \R^m) = 2p$. 
\end{thm}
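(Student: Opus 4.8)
The plan is to run the same pairing construction used above for split $\mf{f}_4$, but to carry it out abstractly for an arbitrary simple split $\mf{g}$, the only genuinely new ingredient being a uniform check that the dimension bookkeeping closes up with exactly $m=\dim\mf{a}$ abelian directions. I would begin from the principal $Z$-grading by root height, $\mf{g}=\dotbigoplus{\lambda=-\Delta}{\Delta}\mf{g}_\lambda$ with $\mf{g}_0=\mf{a}$ and $\Delta$ the height of the highest root, and record the two facts I need: first, $[\mf{g}_\lambda,\mf{g}_\sigma]\subseteq\mf{g}_{\lambda+\sigma}$ (zero when $|\lambda+\sigma|>\Delta$); second, the dimensions $d_\lambda:=\dim\mf{g}_\lambda$. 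For the split case each root space is one-dimensional, so $d_\lambda$ is the number of positive roots of height $\lambda$, and by Kostant's description of the height distribution (the dual partition of the exponents) the sequence $d_0,d_1,\dots,d_\Delta$ is non-increasing with $d_0=d_1=m$.

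Next I would set $N_-=\dot\bigoplus_{\lambda=0}^{\Delta}\mf{g}_{-\lambda}$ and place the remaining positive pieces together with the abelian directions into $N_+$, pairing each $\mf{g}_\lambda$ ($\lambda\geq 1$) as the null partner of $\mf{g}_{-(\lambda-1)}$ and pairing the top space $\mf{g}_{-\Delta}$ against a block of $\R$'s. Since $d_{\lambda-1}\geq d_\lambda$, every mismatch is corrected by adjoining abelian dimensions on the $N_+$ side only; the number adjoined telescopes to $\sum_{\lambda=1}^{\Delta}(d_{\lambda-1}-d_\lambda)+d_\Delta=d_0=m$, exactly the rank, so it is supplied precisely by the $\R^m$ summand. (Equivalently, $\dim N_-=m+|\Sigma^+|$ while the positive root spaces contribute only $|\Sigma^+|$ to $N_+$, forcing exactly $m$ padding vectors.) This makes $\dim N_-=\dim N_+=p$, so the signature is neutral and the group is $O(p,p)$ with $2p=\dim(\mf{g}\oplus\R^m)$.

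Then I would exhibit the element $X\in\mf{a}\subset\mf{o}(p,p)$ of the class obtained by assigning boost weight $2\lambda-1$ to $\mf{g}_\lambda$ and the opposite weight $-(2\lambda-1)$ to its null partner, the padding $\R$'s inheriting the complementary positive weights; since opposite weights on each null pair preserve $g$, this $X$ is genuinely skew and lies in the Cartan subalgebra $\mf{a}$ of $\mf{o}(p,p)$. The crux is then a one-line computation: every nonzero structure constant arises from $[\mf{g}_\lambda,\mf{g}_\sigma]\subseteq\mf{g}_{\lambda+\sigma}$, so its boost weight equals $(2\lambda-1)+(2\sigma-1)-(2(\lambda+\sigma)-1)=-1$, independent of the signs of $\lambda,\sigma$ and of which is larger, while the abelian $\R^m$ contributes no structure constants at all. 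Hence every component of $\mu$ has boost weight exactly $-1$, so $e^{tX}\cdot\mu=e^{-t}\mu\to 0$ as $t\to\infty$ (equivalently, eq.\ (\ref{eq:bweq}) holds with equality for the class just described), which places $\mu$ in the null cone.

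I expect the real content — as opposed to routine set-up — to lie in the dimension bookkeeping of the second paragraph: the assertion that the only padding ever required is on the $N_+$ side and that it sums to exactly $m$. This is precisely where the monotonicity $d_0\geq d_1\geq\cdots\geq d_\Delta$ (the dual-partition form of the height distribution) is indispensable, since without it one could not guarantee a clean neutral-signature pairing using only abelian additions to $\mf{g}$. By contrast the weight computation, though it is the reason the algebra lands in the null cone, is completely uniform and needs no case analysis once the grading and the rule $\bw(\mf{g}_\lambda)=2\lambda-1$ are in place.
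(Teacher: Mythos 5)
Your proposal is correct and follows essentially the same route as the paper's proof: the $Z$-grading of $\mf{g}$ by root height with $\mf{g}_0=\mf{a}$, the null pairing of $\mf{g}_\lambda$ against $\mf{g}_{-(\lambda-1)}$ with abelian padding $m_\lambda=\dim\mf{g}_{\lambda-1}-\dim\mf{g}_\lambda$ telescoping to $m$, and the uniform boost-weight check $(2\lambda-1)+(2\sigma-1)-(2(\lambda+\sigma)-1)=-1$. The only difference is that you explicitly justify the monotonicity $d_0\geq d_1\geq\cdots\geq d_\Delta$ via Kostant's description of the height distribution, a fact the paper's proof uses implicitly when defining the $m_\lambda$; this is a welcome extra precision, not a different argument.
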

\begin{proof}
The proof is constructive and follow the lines of the Split $F_4\times \R^4$ case. In general, we use $Z$-grading of the split $\mf{g}$ using the  a root space decomposition, as explained above: 
\[ \mf{g}=\dotbigoplus{\lambda=-\Delta}{\Delta}\mf{g}_{\lambda},\]
where $\mf{g}_0=\mf{a}$ and 
\[ [\mf{g}_\lambda,\mf{g}_\sigma]\begin{cases} \subset\mf{g}_{\lambda+\sigma}, & |\lambda+\sigma|\leq \Delta.\\
=0, & |\lambda+\sigma|>\Delta. 
\end{cases} \] 
The dimensions of $\mf{g}_{\lambda}$ are different, but we add appropriate abelian dimensions $\R$, as follows: For each positive $\lambda$:
\[ \tilde{\mf{g}}_\lambda:=\mf{g}_{\lambda}\oplus \R^{m_{\lambda}}, \qquad m_{\lambda}=\dim\mf{g}_{\lambda-1}-\dim\mf{g}_{\lambda}, \quad \lambda=1,2,...,\Delta.\]
Notice that 
\[ \dim\mf{g}_{\Delta}+\sum_{\lambda=1}^{\Delta}m_\lambda=\dim\mf{g}_0=\dim\mf{a}=m\]
due to $\dim\mf{g}_{-\lambda}=\dim\mf{g}_{\lambda}$. We do a  similar pairing: \\
\begin{tabular}{c||c|c|c|c|c|c|c|c|}
\hline
Class  &${ 2\Delta+1}$ & ${ 2\Delta-1}$&$\cdots$ & $2\lambda+1$&$2\lambda-1$ & $\cdots$ &{\bf 3}&{\bf 1 }\\
 \hline
$N_+$& $\mathbb{R}^{\dim\mf{g}_{\Delta}}$  & $\tilde{\mf{g}}_{\Delta}$ & $\cdots$ & $\tilde{\mf{g}}_{\lambda+1}$ & $\tilde{\mf{g}}_{\lambda}$ & $\cdots$ & $\tilde{\mf{g}}_{2}$ & $\tilde{\mf{g}}_{1}$ \\
\hline 
$N_-$ & $\mf{g}_{-\Delta}$ & $\mf{g}_{-\Delta+1}$ & $\cdots$ & $\mf{g}_{-\lambda}$  & $\mf{g}_{-\lambda+1}$ & $\cdots$ & $\mf{g}_{-1}$ & $\mf{a}$  \\ 
\hline
\end{tabular}\\
Thus, $N_-=\dot\bigoplus_{\lambda=0}^{\Delta}\mf{g}_{-\lambda}$, and $N_+=\mathbb{R}^{\dim\mf{g}_{\Delta}}\oplus\dot\bigoplus_{\lambda=1}^{\Delta}\tilde{\mf{g}}_{\lambda}=\R^m\oplus\dot\bigoplus_{\lambda=1}^{\Delta}\mf{g}_{\lambda}$. 
For all non-zero structure constants from $[\mf{g}_\lambda,\mf{g}_\sigma] \subset\mf{g}_{\lambda+\sigma}$, we again get 
\[(2\lambda-1)+(2\sigma-1)-(2(\lambda+\sigma)-1)=-1,\] 
and the Lie algebra $\mf{g}\oplus \R^m$ is thus in the null cone. 
\end{proof}
\paragraph{Example: Split $SO(q,q)$.}
The split $SO(q,q)$ groups are of real rank $q(=\dim\mf{a})$ and the nilpotent subalgebra of positive roots, $\mf{n}=\dot\bigoplus_{\lambda=1}^\Delta\mf{g}_\lambda$ is of dimension $q(q-1)$. By the above construction, the Lie algebra $\mf{o}(q,q)\oplus\R^q$ is in the null cone of the $O(q^2,q^2)$-action. The Hasse diagram of, for example, the Lie algebras $D_4$ and $D_5$ are:\\
$D_4:\qquad $\begin{dynkinDiagram}[edge length=1cm]{D}{4}
\hasse{D}{4}
\end{dynkinDiagram}$\quad$ 
$D_5:\qquad $\begin{dynkinDiagram}[edge length=1cm]{D}{5}
\hasse{D}{5}
\end{dynkinDiagram}\\
These represent the split $SO(4,4)$ and $SO(5,5)$ groups. The dimensions of their lower central series are $[12, 8,5,2,1,0]$ and $[20, 15, 11, 7,4,2,1,0]$, respectively. This implies that the dimensions of the $\mf{g}_\lambda$ subspaces are $[4,3,3,1,1]$ and $[5,4,4,3,2,1,1]$. Hence the class of each of these are: 
\begin{itemize}
    \item Split $\mf{s}\mf{o}(4,4)\oplus\R^4$: Of dimension 32, in the null cone of the $O(16,16)$ action, and in the class $[11, 9,7_{\times3},5_{\times 3},3_{\times 4},1_{\times4}]$
    \item Split $\mf{s}\mf{o}(5,5)\oplus\R^5$: Of dimension 50, in the null cone of the $O(25,25)$ action, and in the class $[15,13, 11_{\times2}, 9_{\times3},7_{\times4},5_{\times 4},3_{\times 5},1_{\times5}]$
\end{itemize}
\paragraph{Example: The split exceptional groups.} The split groups $G_2$ and $F_4$ were discussed earlier. For the split $E_6$, $E_7$ and $E_8$ groups the real rank is 6, 7, and 8, respectively. By the above construction, the following Lie algebras are in the null cone: 
\begin{itemize}
    \item Split $\mf{e}_6\oplus\R^6$: Of dimension 84, and in the null cone of the $O(42,42)$-action. 
    \item Split $\mf{e}_7\oplus\R^7$: Of dimension 140, and in the null cone of the $O(70,70)$-action. 
    \item Split $\mf{e}_8\oplus\R^8$: Of dimension 256, and in the null cone of the $O(128,128)$-action. 
\end{itemize}
Table \ref{SplitLCS} gives the dimension of the split exceptional groups with the dimensions of the lower central series (LCS) and the decending series of the solvable Lie algebra $\mf{a}\oplus\mf{n}$. All of these can be read of the corresponding Hasse diagrams. 
\begin{table}[ht]
    \centering
\begin{tabular}{|c||c|c|c|c|c|}
 \hline\hline 
 $G$ & $\dim(\mf{a})$ & $\dim(\mathfrak{n})$ & LCS & $\dim(G)$ \\
\hline \hline 
Split $G_2$  & $2$ & $6$ & $[6,4, 3,2, 1, 0]$  & $14$ \\
 \hline 
Split $F_4$  & $4$ & $24$ & $[24,20,17,14,11,8,6,4,3,2,1, 0]$  & $52$ \\
 \hline 
 Split $E_6$ &   $6$&  $36$ &   $[36,30,25,20,15,11,8,5,3,2,1, 0]$  & $78$ \\
 \hline 
  Split $E_7$ &  $7$&  $63$ &   $\begin{matrix} [63,56,50,44,38,32,27,22,~~~~\\~~~~~~18,14,11,8,6,4,3,2,1, 0]\end{matrix} $  & $133$ \\
 \hline 
  Split $E_8$ &   $8$&  $120$ &   $\begin{matrix}[120,112,105,98,91,84,77, 70,~~~~ \\
 ~~64,58,52,46,41,36,32,28,24,20, \\ ~~~\quad 17,14,12,10,8,6,5,4,3,2,1, 0]\end{matrix}$  & $248$ \\
 \hline 
\end{tabular}
\caption{A table of the split exceptional Lie algebras with the dimension of the Cartan subalgebra and the nilpotent subalgebra of the positive roots. Also the dimension of the lower central series (LRS) and the dimension of the group itself.}
\label{SplitLCS}
\end{table}

Table \ref{Splitnullcone} gives the dimension of the grading $\mf{g}_\lambda>0$ and the corresponding class of the Lie algebras in the null cone. We note that the class of split $\mf{g}_2\oplus\R^2$ using this general construction is different from the one earlier found by "brute force". This indicates that there may be more non-isometric classes of these exceptional Lie algebras in the null cone. 
\begin{table}[ht]
    \centering
\begin{tabular}{|c||c|c|c|c|c|}
 \hline\hline 
 $\mf{g}$ & $\dim(\mf{g}_{\lambda>0})$  & Class \\
\hline \hline 
Split $\mf{g}_2\oplus\R^2$   & $[2,1,1,1,1]$  & $[11,9,7,5,3,3,1,1]$ \\
 \hline 
Split $\mf{f}_4\oplus\R^4$  &  $[4,3,3,3,3,2,2,1,1,1,1]$  & $\begin{matrix}[23,21,19,17,15_{\times 2},13_{\times 2},\qquad \\ \qquad11_{\times 3},9_{\times 3},7_{\times 3},5_{\times 3},3_{\times 4},1_{\times 4}]\end{matrix}$ \\
 \hline 
 Split $\mf{e}_6\oplus\R^6$ &    $[6,5,5,5,4,3,3,2,1,1,1]$  & $\begin{matrix} [23,21,19,17_{\times 2},15_{\times3},13_{\times 3},\qquad \\\qquad 11_{\times 4},9_{\times 5},7_{\times 5},5_{\times 5},3_{\times 6},1_{\times 6}]\end{matrix}$ \\
 \hline 
  Split $\mf{e}_7\oplus\R^7$ &    $\begin{matrix} [7,6,6,6,6,5,5,4,~~~~\\~~~~~~4,3,3,2,2,1,1,1,1]\end{matrix} $  & $\begin{matrix} [35,33,31,29,27_{\times2},25_{\times2}, 23_{\times 3}, 21_{\times3},19_{\times 4}, \quad \\ \quad17_{\times 4},  15_{\times 5},13_{\times 5}, 11_{\times 6},9_{\times 6},7_{\times 6},5_{\times 6},3_{\times 7},1_{\times7}]\end{matrix}$ \\
 \hline 
  Split $\mf{e}_8\oplus\R^8$ &     $\begin{matrix}[8,7,7,7,7,7,7, 6,6,6,~~~~ \\
 ~~6,5,5,4,4,4,4,3,3, \\ ~~~\quad 2,2,2,2,1,1,1,1,1,1]\end{matrix}$  & $\begin{matrix} [59,57,55,53,51,49,47_{\times2},45_{\times2},43_{\times2},  41_{\times2}, \quad \\ \quad 39_{\times 3},37_{\times3}, 35_{\times 4},33_{\times 4},31_{\times 4},29_{\times 4},\quad \\ \quad 27_{\times5},25_{\times5},  23_{\times 6}, 21_{\times6},19_{\times 6}, 17_{\times 6},  \quad \\ \qquad 15_{\times 7},13_{\times 7}, 11_{\times 7} ,9_{\times 7},7_{\times 7},5_{\times 7},3_{\times 8},1_{\times8}]\end{matrix}$ \\
 \hline 
\end{tabular}
\caption{The null cone Lie algebras constructed from the exceptional split Lie algebras. The list of dimensions of the subspaces $\mf{g}_{\lambda>0}$ along with their classes are also given. For the complex versions, $\mf{g}^\C_2$, $\mf{f}^\C_4$, $\mf{e}^\C_{6}$, $\mf{e}^\C_{7}$, and $\mf{e}^\C_{8}$   considered as a real Lie algebras, we can use the same table as for the split Lie algebras, but all the dimensions of the spaces double. } \label{Splitnullcone}
\end{table}

\subsection{General $G\times\R^m$, $G$ real and semi-simple}
For a real semi-simple group $G$ we can do a similar restricted root space decomposition to get a $Z$-grading of $\mf{g}$: 
\[ \mf{g}=\dotbigoplus{\lambda=-\Delta}{\Delta}\mf{g}_{\lambda},\]
but where $\mf{g}_0=\mf{m}_0+\mf{a}$ is not necessarily abelian, yet still:  
\[ [\mf{g}_\lambda,\mf{g}_\sigma]\begin{cases} \subset\mf{g}_{\lambda+\sigma}, & |\lambda+\sigma|\leq \Delta.\\
=0, & |\lambda+\sigma|>\Delta. 
\end{cases}. \] 
Again the dimensions of $\mf{g}_{\lambda}$ are different, but we add an appropriate abelian dimension $\R$, for each positive $\lambda$:
\[ \tilde{\mf{g}}_\lambda:=\mf{g}_{\lambda}\oplus \R^{m_{\lambda}}, \qquad m_{\lambda}=\dim\mf{g}_{\lambda-1}-\dim\mf{g}_{\lambda}, \quad \lambda=1,2,...,\Delta.\]
Notice that in this case
\[ \dim\mf{g}_{\Delta}+\sum_{\lambda=1}^{\Delta}m_\lambda=\dim\mf{g}_0=\dim\mf{a}+\dim\mf{m}_0=m.\]
We do a  similar pairing: \\
\begin{tabular}{c||c|c|c|c|c|c|c|c|}
\hline
Class  &${ 2\Delta+1}$ & ${ 2\Delta-1}$&$\cdots$ & $2\lambda+1$&$2\lambda-1$ & $\cdots$ &{\bf 3}&{\bf 1 }\\
 \hline
$N_+$& $\mathbb{R}^{\dim\mf{g}_{\Delta}}$  & $\tilde{\mf{g}}_{\Delta}$ & $\cdots$ & $\tilde{\mf{g}}_{\lambda+1}$ & $\tilde{\mf{g}}_{\lambda}$ & $\cdots$ & $\tilde{\mf{g}}_{2}$ & $\tilde{\mf{g}}_{1}$ \\
\hline 
$N_-$ & $\mf{g}_{-\Delta}$ & $\mf{g}_{-\Delta+1}$ & $\cdots$ & $\mf{g}_{-\lambda}$  & $\mf{g}_{-\lambda+1}$ & $\cdots$ & $\mf{g}_{-1}$ & $\mf{a}+\mf{m}_0$  \\ 
\hline
\end{tabular}\\
where  $N_-=\dot\bigoplus_{\lambda=0}^{\Delta}\mf{g}_{-\lambda}$, and $N_+=\mathbb{R}^{\dim\mf{g}_{\Delta}}\oplus\dot\bigoplus_{\lambda=1}^{\Delta}\tilde{\mf{g}}_{\lambda}=\R^m\oplus\dot\bigoplus_{\lambda=1}^{\Delta}\mf{g}_{\lambda}$. 

Thus we have proven that: 
\begin{thm}
    Given a semi-simple real Lie algebra $\mf{g}$. Assume that the subalgebra $\mf{a}\oplus \mf{m}_0$ in the restricted-root decomposition is of dimension $m$. Then the Lie algebra $\mf{g}\oplus \R^m$ is in the null cone of the $O(p,p)$-action where $p=\tfrac 12\dim(\mf{g}\oplus \R^m) $. 
\end{thm}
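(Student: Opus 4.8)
The plan is to run the very same construction used for the split case (the proof of the preceding theorem), since the only structural change is that the degree-zero piece $\mf{g}_0=\mf{m}_0\doplus\mf{a}$ of the restricted-root $Z$-grading is now allowed to be non-abelian. First I would fix the restricted-root space decomposition of the real semi-simple $\mf{g}$, giving the $Z$-grading $\mf{g}=\dot\bigoplus_{\lambda=-\Delta}^{\Delta}\mf{g}_\lambda$ with $\mf{g}_0=\mf{m}_0\doplus\mf{a}$ and the grading rule $[\mf{g}_\lambda,\mf{g}_\sigma]\subseteq\mf{g}_{\lambda+\sigma}$ (vanishing when $|\lambda+\sigma|>\Delta$); this is standard (Knapp). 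Since $\mf{g}\oplus\R^m$ is a Lie-algebra direct sum, the $\R^m$ factor is central and contributes no structure constants, so every non-zero structure constant of $\mf{g}\oplus\R^m$ is accounted for by the grading rule.

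Next I would build the null frame exactly as in the pairing table preceding the theorem. Using $\dim\mf{g}_{-\lambda}=\dim\mf{g}_\lambda$, I pad each positive piece to $\tilde{\mf g}_\lambda:=\mf{g}_\lambda\oplus\R^{m_\lambda}$ with $m_\lambda=\dim\mf{g}_{\lambda-1}-\dim\mf{g}_\lambda$, drawing the $\R^{m_\lambda}$ from the abelian factor; the identity $\dim\mf{g}_\Delta+\sum_{\lambda=1}^{\Delta}m_\lambda=\dim\mf{g}_0=m$ guarantees the abelian dimensions are exactly used up. I then set $N_-=\dot\bigoplus_{\lambda=0}^{\Delta}\mf{g}_{-\lambda}$ and $N_+=\R^{\dim\mf{g}_\Delta}\oplus\dot\bigoplus_{\lambda=1}^{\Delta}\tilde{\mf g}_\lambda=\R^m\oplus\dot\bigoplus_{\lambda=1}^{\Delta}\mf{g}_\lambda$, each of dimension $p=m+\dim\mf{n}$, which both exhibits the neutral signature with $2p=\dim(\mf{g}\oplus\R^m)$ and assigns the boost-weight class: an element of $\mf{g}_\lambda$ receives score $2\lambda-1$, so grading $\lambda\ge 1$ sits in $N_+$, grading $\lambda\le 0$ in $N_-$, and the whole of $\mf{g}_0$ lands in the class-$1$ column of $N_-$.

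It then remains to verify the fundamental inequality (\ref{eq:bweq}) for every non-zero structure constant. Any such constant comes from $[\mf{g}_\lambda,\mf{g}_\sigma]\subseteq\mf{g}_{\lambda+\sigma}$, so its boost weight evaluates to
\[ (2\lambda-1)+(2\sigma-1)-(2(\lambda+\sigma)-1)=-1\le -1,\]
independently of the signs of $\lambda,\sigma$. Crucially, this single computation also covers the brackets internal to the non-abelian $\mf{g}_0$ (the case $\lambda=\sigma=0$ gives $(-1)+(-1)-(-1)=-1$) as well as the adjoint action of $\mf{g}_0$ on each $\mf{g}_\lambda$. Having produced an $X\in\mf{a}$ of $\mf{o}(p,p)$ with $\lim_{t\to\infty}e^{tX}\cdot\mu=0$, Theorem \ref{thmRS} finishes the argument: $\mf{g}\oplus\R^m\in\N$.

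The main obstacle, and the only genuine difference from the split case, is exactly the non-abelianity of $\mf{g}_0=\mf{m}_0\doplus\mf{a}$: in the split case $\mf{g}_0=\mf{a}$ is abelian and has no internal brackets, whereas here $[\mf{m}_0,\mf{m}_0]$ may be non-zero and must be shown harmless. The resolution is that placing all of $\mf{g}_0$ at a single class value in $N_-$ keeps the uniform score formula valid. A secondary point to confirm is that $m_\lambda\ge 0$, i.e. that the layer dimensions $\dim\mf{g}_\lambda$ are non-increasing in $\lambda\ge 1$ so that the padding $\tilde{\mf g}_\lambda$ is well-defined; in the split and complex cases this is Kostant's statement that the number of positive roots of height $h$ is non-increasing in $h$, and I would check that the analogous monotonicity persists for the multiplicity-weighted restricted-root layers.
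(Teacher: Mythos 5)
Your proposal is correct and takes essentially the same route as the paper's own proof: the restricted-root $Z$-grading with $\mf{g}_0=\mf{m}_0\doplus\mf{a}$, the padded spaces $\tilde{\mf{g}}_\lambda=\mf{g}_\lambda\oplus\R^{m_\lambda}$, the identical pairing of $N_\pm$ columns, and the uniform boost-weight computation $(2\lambda-1)+(2\sigma-1)-(2(\lambda+\sigma)-1)=-1$, which, as you correctly note, also covers the internal brackets of the non-abelian $\mf{g}_0$ via the case $\lambda=\sigma=0$. Your closing observation that one should verify $m_\lambda\geq 0$, i.e.\ that the multiplicity-weighted layer dimensions $\dim\mf{g}_\lambda$ are non-increasing for $\lambda\geq 1$, flags a hypothesis the paper uses implicitly without comment (it holds in all the examples tabulated there), so this is diligence beyond the paper rather than a deviation from it.
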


\paragraph{Example: Real form $SO(p+1,2)$, $B_n$ I and $D_n$ I.} For the real forms $SO(p+1,2)$, $p\geq 2$, we have the dimension of $\mf{m}_0$ equal to $(p-1)(p-2)/2$, and the real rank is 2$(=\dim\mf{a})$. The dimensions of the lower central series of $\mf{n}$ is $[2p,p,1,0]$. Consequently, the dimensions of $\mf{g}_{\lambda}$, $\lambda=1, 2, 3$ are $p$, $p-1$, 1. The above construction therefore gives us a $(p^2+p+6)$-dimensional pseudo-Riemannian metric in the null cone of the Lie algebra $\mf{s}\mf{o}(p+1,2)\oplus\R^m$, where $m=(p^2-3p+6)/2$. It is of class $[7,5_{\times (p-1)}, 3_{\times p},1_{\times m}]$.

\paragraph{Example: Real form $Sp(8,4)$, $C_6$ II.} For the real form $Sp(8,4)$ the dimension of $\mf{m}_0$ is 16 and is of real rank 2$(=\dim\mf{a})$. Hence, $m=18$. The dimension of the nilpotent subalgebra consisting of the positive roots $\mf{n}=\dot\bigoplus_{\lambda=1}^\Delta\mf{g}_\lambda$ is 30. Computing the dimensions of the lower central series of the nilpotent subalgebra $\mf{n}$ we get the sequence $[30,14,7,3,0]$. The dimensions of $\mf{g}_{\lambda}$, $\lambda=1, 2, 3, 4$ are therefore 16, 7, 4, 3, respectively. The above construction places the 96-dimensional Lie algebra of $Sp(8,4)\times \R^{18}$ in the null cone of the $O(48,48)$-action and is of  class $[9_{\times 3},7_{\times 4},5_{\times 7}, 3_{\times 16}, 1_{\times 18}]$. 


\paragraph{Example: Real form $F_4^{-20}$, FII.} The real form $F_4^{-20}$ of the exceptional Lie group $F_4$ has $\dim\mf{m}_0=21$, real rank 1, and the nilpotent subalgebra consisting of the positive roots $\mf{n}=\dot\bigoplus_{\lambda=1}^\Delta\mf{g}_\lambda$ has the following sequence of dimensions of the lower central series: $[15,7,0]$. Thus, the dimensions of $\mf{g}_{\lambda}$, $\lambda=1, 2$ are 8, and 7, respectively. Consequently, the 74-dimensional Lie algebra $\mf{f}^{-20}_4\oplus\R^{22}$ is in the null cone of the $O(37,37)$-action and is of class $[5_{\times 7},3_{\times 8}, 1_{\times 22}]$.  

\paragraph{Example: Real form $E_7^{-25}$, EVII.} The real form $E_7^{-25}$ of the exceptional Lie group $E_7$ has $\dim\mf{m}_0=28$, real rank 3, and the nilpotent subalgebra consisting of the positive roots $\mf{n}=\dot\bigoplus_{\lambda=1}^\Delta\mf{g}_\lambda$ has the following sequence of dimensions 17, 16, 9, 8, 1. Thus the 164-dimensional Lie algebra $\mf{e}_7^{-25}\oplus\R^{31}$ is in the null cone of the $O(82,82)$ action and is of class $[11,9_{\times 8},7_{\times 9},5_{\times 16},3_{\times 17},1_{\times 31}]$.

\paragraph{Example: Real form $E_8^{-24}$, EIX.} The real form $E_8^{-24}$ of the exceptional Lie group $E_8$ has $\dim\mf{m}_0=28$, real rank 4, and the nilpotent subalgebra consisting of the positive roots $\mf{n}=\dot\bigoplus_{\lambda=1}^\Delta\mf{g}_\lambda$ has the following sequence of dimensions 18,17,17,17, 10, 9, 9, 8,1,1,1. Thus the 280-dimensional Lie algebra $\mf{e}_8^{-24}\oplus\R^{32}$ is in the null cone of the $O(140,140)$-action and is of class\\ $[23,21,19,17_{\times 8},15_{\times9},13_{\times 9}, 11_{\times 10},9_{\times 17},7_{\times 17},5_{\times 17},3_{\times 18},1_{\times 32}]$.

\paragraph{Semi-simple algebras $\mf{g}=\oplus_i\mf{g}_i$, where $\mf{g}_i$ is simple.} In appendix \ref{ListSimple} all the real simple Lie algebras are listed with the dimensions of $\mf{m}_0$, $\mf{a}$ and total space $\mf{g}\oplus\R^m$. If the Lie algebra is sum of simple Lie algebras, hence, properly semi-simple, then we can do the direct sum of each level of grading. If for each $i$: 
\[ \mf{g}_i\oplus\R^{m_i}= \left(\dotbigoplus{\lambda=-\Delta_i}{0}\mf{g}_{i,\lambda}\right)\doplus \left(\dotbigoplus{\lambda=1}{\Delta_i}\tilde{\mf{g}}_{i, \lambda}\right),\]
we let $\Delta:=\max(\Delta_i)$ and define:
\[ {\mf g}_{\lambda}=\dotbigoplus{i}{}\mf{g}_{i,\lambda}, ~\lambda=-\Delta,..,0;  \qquad \tilde{\mf g}_{\lambda}=\dotbigoplus{i}{}\tilde{\mf{g}}_{i,\lambda},~\lambda=1,..,\Delta.\] 
This would provide with a construction of the algebra $\mf{g}\oplus\R^m$ placing it in the null cone for $\mf{g}=\oplus_i\mf{g}_i$  an arbitrary semi-simple Lie algebra, and $m=\sum_im_i=\sum_i\dim{\mf{g}_{i,0}}$.

\section*{Acknowledgements} 
The author would like to acknowledge a very useful discussion with Boris Kruglikov. 

\appendix

\section{Summary of spaces $\mf{g}\oplus\R^m$, $\mf{g}$ simple, in the null cone}
\label{ListSimple}
All of the cases here are in the null cone of the $O(p,p)$ action for $p=\tfrac 12\dim(\mf{g}\oplus\R^m)$. In the complex cases, the dimensions given are $\dim_\R(\mf{g}^\C_\lambda)$, where $\mf{g}_\lambda^\C$ are the complexified gradings of the real split case. 
\subsection{Classical algebras, split, compact and complex case, $\mf{g}\oplus\R^m$. }
\begin{tabular}{|c||c|c|c|c|c|}
 \hline\hline 
$\mf{g}$ & $\dim(\mf{m}_0)$ & $\dim(\mf{a})$ & $\dim(\mathfrak{n})$ & $m$ & $\dim(\mf{g}\oplus\R^m)$ \\
\hline \hline 
$\mf{s}\mf{l}(n+1,\R), n\geq 1$ &$0$ & $n$ & $\tfrac{n(n+1)}2$ &  $n$ & $n(n+3)$ \\
 \hline 
 $\mf{s}\mf{u}(n+1), n\geq 1$ &$n(n+2)$ & $0$ & $0$ &  $n(n+2)$ & $2n(n+2)$ \\
 \hline
 $\mf{s}\mf{l}(n+1,\C), n\geq 1$ &$0$ & $2n$ & ${n(n+1)}$ &  $2n$ & $2n(n+3)$ \\
 \hline\hline
 $\mf{s}\mf{o}(n+1,n), n \geq 1$ &$0$ & $n$ & $n^2$ &  $n$ & $2n(n+1)$ \\
 \hline 
 $\mf{s}\mf{o}(2n+1), n\geq 1$ &$n(2n+1)$ & $0$ & $0$ &  $n(2n+1)$ & $2n(2n+1)$ \\
 \hline
 $\mf{s}\mf{o}(2n+1,\C), n\geq 1$ &$0$ & $2n$ & ${2n^2}$ &  $2n$ & $4n(n+1)$ \\
 \hline\hline
 $\mf{s}\mf{p}(2n,\R), n \geq 2$ &$0$ & $n$ & $n^2$ &  $n$ & $2n(n+1)$ \\
 \hline 
 $\mf{s}\mf{p}(n), n\geq 2$ &$n(2n+1)$ & $0$ & $0$ &  $n(2n+1)$ & $2n(2n+1)$ \\
 \hline
 $\mf{s}\mf{p}(2n,\C), n\geq 2$ &$0$ & $2n$ & ${2n^2}$ &  $2n$ & $4n(n+1)$ \\
 \hline\hline
 $\mf{s}\mf{o}(n,n), n \geq 3$ &$0$ & $n$ & $n(n-1)$ &  $n$ & $2n^2$ \\
 \hline 
 $\mf{s}\mf{o}(2n), n\geq 3$ &$n(2n-1)$ & $0$ & $0$ &  $n(2n-1)$ & $2n(2n-1)$ \\
 \hline
 $\mf{s}\mf{o}(2n,\C), n\geq 3$ &$0$ & $2n$ & ${2n(n-1)}$ &  $2n$ & $4n^2$ \\
 \hline
\end{tabular}
\subsection{Classical algebras, other real forms, $\mf{g}\oplus\R^m$}
\begin{tabular}{|c||c|c|c|c|c|}
 \hline\hline 
$\mf{g}$ & $\dim(\mf{m}_0)$ & $\dim(\mf{a})$ & $\dim(\mathfrak{n})$ & $m$ & $\dim(\mf{g}\oplus\R^m)$ \\
\hline \hline 
 $\mf{s}\mf{u}^*(2n)$ &$3n$ & $n-1$ & $ 2n(n-1)$ & $ 4n-1$ & $4n^2+4n-2$   \\
 \hline 
$\mf{s}\mf{u}(p,q), p\geq q$ &$\begin{matrix}(p-q)^2 \quad \\ \quad +q-1\end{matrix}$ & $q$ & $q(2p-1)$ & $\begin{matrix}(p-q)^2\quad \\ \quad+2q-1\end{matrix}$  & $\begin{matrix}2(p^2+q^2)\quad \\ \quad+2q-2\end{matrix}$  \\
\hline
 $\mf{s}\mf{o}(p+1,q), p> q$ &$\begin{matrix}\tfrac12(p-q+1) \\ \quad\times (p-q)\end{matrix}$ & $q$ & $pq$ & $\begin{matrix}\tfrac12[(p-q)^2\quad \\ \quad+p+q]\end{matrix}$  & $\begin{matrix}p^2+q^2\quad \\ \quad+p+q\end{matrix}$ \\
\hline
  $ \mf{s}\mf{p}(2p,2q)$, $p\geq q$ &$\begin{matrix}2(p-q)^2\quad \\ \quad+p+2q\end{matrix} $ & $q$ & $(4p-1)q$ & $\begin{matrix}2(p-q)^2\quad \\ \quad+p+3q\end{matrix}$  & $\begin{matrix}4(p^2+q^2)\quad \\ \quad+4q+2p\end{matrix}$ \\
  \hline
  $\mf{s}\mf{o}^*(2n)$, $n$ even &$2n$ & $\tfrac 12n$ & $ \tfrac12n(2n-3)$ & $ \tfrac 52n$ & $2n(n+1)$   \\
 \hline 
 $\mf{s}\mf{o}^*(2n)$, $n$ odd &$2n-1$ & $\frac12(n-1)$ & $\begin{matrix}\tfrac12(n-1)\quad \\ \quad\times(2n-1)\end{matrix}$ & $ \tfrac 12(5n-3)$ & $2(n^2+n-1)$   \\
 \hline 
\end{tabular}

\subsection{The exceptional Lie algebras, $\mf{g}\oplus\R^m$.}
\begin{tabular}{|c||c|c|c|c|c|}
 \hline\hline 
 $\mf{g}$ & $\dim(\mf{m}_0)$ & $\dim(\mf{a})$ & $\dim(\mathfrak{n})$ & $m$ & $\dim(\mf{g}\oplus\R^m)$ \\
\hline \hline 
$\mf{g}_2$ split &$0$ & $2$ & $6$ & $2$  & $16$ \\
 \hline 
 $\mf{g}_2$ compact &$14$ & $0$ & $0$ & $14$  & $28$ \\
 \hline 
$\mf{g}^{\C}_2$ &$0$ & $4$ & $12$ & $4$  & $32$ \\
 \hline \hline
$\mf{f}_4$ split &$0$ & $4$ & $24$ & $4$  & $56$ \\
 \hline 
 $\mf{f}^{-20}_4$, FII &$21$ & $1$ & $15$ & $22$  & $74$ \\
 \hline 
 $\mf{f}_4$ compact &$52$ & $0$ & $0$ & $52$  & $104$ \\
 \hline 
 $\mf{f}^\C_4$  &$0$ & $8$ & $48$ & $8$  & $112$ \\
 \hline \hline
 $\mf{e}_6$ split & $0$&  $6$&  $36$ &   $6$  & $84$ \\
 \hline 
 $\mf{e}^2_6$, EII & $2$&  $4$&  $36$ &   $6$  & $84$ \\
 \hline 
 $\mf{e}^{-14}_6$, EIII& $16$&  $2$&  $30$ &   $18$  & $96$ \\
 \hline 
 $\mf{e}^{-26}_6$, EIV & $28$&  $2$&  $24$ &   $30$  & $108$ \\
 \hline 
 $\mf{e}_6$  compact& $78$&  $0$&  $0$ &   $78$  & $156$ \\
 \hline 
 $\mf{e}^\C_6$ & $0$&  $12$&  $72$ &   $12$  & $168$ \\
 \hline \hline
  $\mf{e}_7$ split  & $0$&  $7$&  $63$ &   $7$ & $140$ \\
 \hline 
 $\mf{e}^{-5}_7$,  EVI& $9$&  $4$&  $60$ &   $13$ & $146$ \\
 \hline 
 $\mf{e}^{-25}_7$,  EVII& $28$&  $3$&  $51$ &   $31$ & $164$ \\
 \hline 
 $\mf{e}_7$ compact  & $133$&  $0$&  $0$ &   $133$ & $266$ \\
 \hline
  $\mf{e}^\C_7$   & $0$&  $14$&  $126$ &   $14$ & $280$ \\
 \hline\hline
  $\mf{e}_8$ split & $0$&  $8$&  $120$ &   $8$ & $256$\\
 \hline 
  $\mf{e}^{-24}_8$, EIX & $28$&  $4$&  $108$ &   $32$ & $280$\\
 \hline 
 $\mf{e}_8$  compact& $248$&  $0$&  $0$ &   $248$ & $496$\\
 \hline 
  $\mf{e}^\C_8$ & $0$&  $16$&  $240$ &   $16$ & $512$\\
 \hline 

\end{tabular}

\bibliography{bibl}

\begin{thebibliography}{abc}

\bibitem{nullcone}
S. Hervik, \textit{Left-invariant Pseudo-Riemannian metrics on Lie Groups: The null cone}, submitted to {Diff. Geo. Appl.}, \texttt{arXiv:2402.03536}

\bibitem{Wolter}
T.H. Wolter, \textit{Einstein Metrics on solvable groups}, {Math. Z.} \textbf{206} (1991) 457-471 

\bibitem{Wolter2}
T.H. Wolter, \textit{Homogeneous manifolds with nonpositive curvature operator}, {Geom. Dedicata} \textbf{37} (1991) 361-370 

\bibitem{Heber} 
J. Heber, \textit{Noncompact homogeneous Einstein spaces}, {Invent. math.} \textbf{133} (1998) 279
\bibitem{L1}
J. Lauret, \textit{Ricci soliton homogeneous nilmanifolds}, {Math. Ann.} \textbf{319} (2001) 715

\bibitem{L2}
J. Lauret, \textit{Standard Einstein solvmanifolds as critical points,}, {Quart. J. Math.} \textbf{52} (2001) 463

\bibitem{L3} 
J. Lauret, \textit{Finding Einstein solvmanifolds by a variational method}, {Math. Z.} \textbf{241} (2002) 83

\bibitem{Sig}
S. Hervik, \textit{Einstein metrics: homogeneous solvmanifolds, generalised Heisenberg groups and black holes}, {J. Geom. Phys.}, \textbf{52} (2004) 298

\bibitem{L4}
J. Lauret, \textit{Degenerations of Lie algebras and geometry of Lie groups}, {Diff. Geom. Appl.} \textbf{18} (2003) 177 
\bibitem{Lauret06}
J. Lauret. \textit{A canonical compatible metric for geometric structures on nilmanifolds.} Ann. Global Anal. Geom., 30(2):107–138,
2006.
\bibitem{Lauret09}
J. Lauret, \textit{Einstein solvmanifolds and nilsolitons} In New developments in Lie theory and geometry, volume {\bf 491} Contemp.
Math., 1–35. AMS Providence, RI, (2009).
\bibitem{Lauret10}
J. Lauret, \textit{Einstein solvmanifolds are standard} Ann. of Math. (2), {\bf 172}(3):1859–1877, (2010).
\bibitem{LW11}
J. Lauret and C. Will, \textit{Einstein solvmanifolds: existence and non-existence questions}. Math. Ann., {\bf 350}(1):199–225, (2011).


\bibitem{L5}
J. Lauret and C. Will, \textit{On Ricci negative Lie groups}, in Abel Symposia 16 (2019), 171-191, Springer Verlag.

\bibitem{CF}
G. Calvaruso, A. Fino, \textit{Four-dimensional pseudo-Riemannian homogeneous Ricci solitons}, Int. J. Geom. Methods Mod. Phys. 12(5), 1550056 (2015)



\bibitem{solwaves}
S. Hervik, \textit{Solvgeometry gravitational waves}, {Class. Quant. Grav.} \textbf{21} (2004) 4273
\bibitem{CR19a}
D. Conti and F. A. Rossi, \textit{Einstein nilpotent Lie groups}. J. Pure Appl. Algebra  {\bf 223}(3):976–997 (2019).
\bibitem{CR19b}
D. Conti and F.A. Rossi, \textit{Ricci-flat and Einstein pseudoriemannian nilmanifolds}, Complex Manifolds {\bf 6}:170–193 (2019). 
\bibitem{CR21}
D. Conti and F.A. Rossi, \textit{Nice pseudo-Riemannian nilsolitons}, \texttt{ 	arXiv:2107.07767 [math.DG]} (2021).


  \bibitem{VSI}
  V.~Pravda, A.~Pravdova, A.~Coley and R.~Milson,
{\it All space-times with vanishing curvature invariants}, {Class. Quant. Grav. } {\bf 19} (2002) 6213

\bibitem{Higher}    A. Coley, R. Milson, V. Pravda and A. Pravdova, 
{\it Vanishing scalar invariant spacetimes in higher dimensions}, {Class. Quant. Grav.} {\bf 21} (2004) 5519.

\bibitem{CFHP} A. Coley, A. Fuster, S. Hervik and N. Pelavas,  \textit{Higher dimensional VSI spacetimes}, {Class. Quant. Grav.} \textbf{23} (2006) 7431-7444


\bibitem{Hervik12}
S.~Hervik,
\textit{Pseudo-Riemannian VSI spaces II},
Class. Quant. Grav. \textbf{29}, 095011 (2012)


\bibitem{HorowitzSteif}
  G.~T.~Horowitz and A.~R.~Steif,
 {\it Spacetime singularities in string theory}, {Phys. Rev. Lett.}  {\bf 64} (1990) 260.

\bibitem{CGHP}
A. Coley, G.W. Gibbons, S. Hervik and C.N. Pope, \textit{Metrics With Vanishing Quantum Corrections}, {Class. Quant. Grav.} {\bf 25} (2008) 145017

  
  \bibitem{SW}
L. \v{S}nobl and P. Winternitz, \textit{Classification and Identification of Lie Algebras}, CRM Monograph Series, AMS (2014)


\bibitem{Markestad}
S. Markestad, \textit{Left-Invariant Pseudo-Riemannian Metrics on Lie Groups}, Master's thesis, University of Stavanger, (2020). 

\bibitem{KW}
H. Kraft and N. Wallach, \textit{On the nullcone of representations of reductive groups}, Pac. J. Math. {\bf 224} 119- 139 (2006)

\bibitem{RS}
R. W. Richardson and P. J. Slodowy, \textit{Minimum vectors for real reductive algebraic groups},  London Math. Soc. (2) {\bf 42}  409-429, (1990). 


\bibitem{Hervik10}
S.~Hervik and A.~Coley,
\textit{On the algebraic classification of pseudo-Riemannian spaces},
Int. J. Geom. Meth. Mod. Phys. \textbf{08}, 1679-1685 (2011)


\bibitem{Hervik14}
S.~Hervik, A.~Haarr and K.~Yamamoto,
\textit{${\mathcal{I}}$-degenerate pseudo-Riemannian metrics},
J. Geom. Phys. \textbf{98}, 384-399 (2015)


\bibitem{Knapp}
A.W. Knapp, \textit{Lie Groups Beyond an introduction} , 2nd Ed., Birkh{\"a}user, (2002).




\end{thebibliography}

\end{document}